\documentclass[11pt]{article}
\usepackage{graphicx} 
\usepackage[margin=1.15in]{geometry}
\linespread{1.2}

\usepackage{caption}
\usepackage{subcaption}

\usepackage{amsfonts}
\usepackage{tabularray}
\usepackage{amsmath}
\usepackage{mathabx}
\usepackage[T1]{fontenc}

\usepackage{setspace}
\usepackage{bm}
\usepackage{authblk}

\usepackage{hyperref}       
\hypersetup{
  colorlinks   = true, 
  urlcolor     = blue, 
  linkcolor    = blue, 
  citecolor   = blue 
}
\usepackage{cleveref}

\usepackage[english]{babel}
%Includes "References" in the table of contents
\usepackage[nottoc]{tocbibind}
\usepackage{amsthm}
\usepackage[thinlines]{easytable}

% Keywords command
\providecommand{\keywords}[1]
{
  \small	
  \textbf{Keywords: } #1
}

\newtheorem{theorem}{Theorem}

\newtheorem{lemma}{Lemma}

\newtheorem{prop}{Proposition}

\theoremstyle{definition}

\theoremstyle{definition}
\newtheorem{exmp}{Example}

\DeclareMathOperator{\trace}{tr}

\DeclareMathOperator{\conv}{conv}

\usepackage{sectsty}
\sectionfont{\centering}

\usepackage[toc,page,title]{appendix}

\usepackage{tikz}
\usetikzlibrary{calc}
\usetikzlibrary{perspective}
\usetikzlibrary{decorations.markings}

\title{\bf{Tight eigenvalue bound on the traveling salesman problem}}

\vspace{2cm}

\author{Lasse H. Wolff}

\affil{\small{\emph{Department of Mathematical Sciences, University of Copenhagen}, \\  \emph{Universitetsparken 5, 2100 Denmark}} \\
E-mail: lhw@math.ku.dk}

\date{\small{\today} }

\begin{document}

\maketitle

\begin{abstract}
\begin{spacing}{1.05}
\noindent A lower bound on the solution to the traveling salesman problem is provided, which is expressed in terms of eigenvalues related to the distance matrix for the problem. This bound has many interesting properties such as transforming appropriately under affine distance transformations and is notably tight for various families of traveling salesman problems with arbitrarily many cities. It is also computed for some real world traveling salesman problems. The eigenvalues in the bound are further related to the \emph{Schoenberg criterion} from Euclidean geometry. Graph theoretic applications to the Hamiltonian cycle and path problems are given by the fact that the new bound entails necessary graph eigenvalue conditions for a graph to be Hamiltonian or traceable. Various non-trivial families of Cayley graphs saturate these Hamiltonicity conditions, thus in a sense providing \emph{almost} counterexamples to the famous conjecture that all Cayley graphs are Hamiltonian.  
\end{spacing}
\end{abstract}

\keywords{Traveling salesman problem, spectral graph theory, Hamiltonian cycle problem}

%\tableofcontents

\section{Introduction} \label{sec:intro}

The \emph{traveling salesman problem} (TSP) is the problem faced by a traveling salesman who must visit a list of cities in a roundtrip before returning home, and who wants to minimize the total travel distance. In a more general formulation of the problem, one is given a complete directionally edge-weighted graph and is tasked with finding the shortest (in terms of the edge-weights) cyclic path that visits every vertex exactly once. This problem has had a long and rich history and is one of the most studied problems in combinatorial optimization, with many books, many more articles, and entire sub-fields of mathematics/computer science dedicated to it \cite{Lawler1985-ch, Gutin2002-wk, ApplegateTSP2006, Cook2014-vv}. 

The input of an $N$-city TSP is here an $N \times N$ \emph{distance matrix} $D$, whose real-valued matrix elements $D_{ij}$ equals the distance from city $i$ to city $j$, with labeling of the cities $i,j \in \{ 1, 2, ... , N \}$, and we can assume $D_{ii}=0$ for all $i$. TSP roundtrips can be associated with permutations of the set of cities $\{1,2,...,N\}$, such that the permutation $\sigma$ represents the roundtrip $ \sigma (1) \rightarrow \sigma(2) \rightarrow  ... \rightarrow \sigma(N) \rightarrow \sigma(1)$, whose TSP length we denote by $L_\sigma (D)$. The TSP \emph{solution} is the length $L_{min} (D)$ of the shortest roundtrip, which can now be expressed as the minimum of $L_\sigma (D)$ over all permutations, i.e.
\begin{align} \label{eq:TSP def}
L_{min} (D) = \min_{\sigma \in S_N} L_\sigma \qquad \text{where} \quad L_\sigma (D) = \sum_{i=1}^N D_{\sigma(i), \sigma(i+1)}
\end{align}
where $\sigma(N+1)$ is identified with $\sigma(1)$, and $S_N$ denotes the set of permutations $N$ elements. For an $N$-city TSP, the number of distinct round-trips equals $\frac{(N-1)!}{2}$, quickly making brute-force approaches intractable. Although algorithms with just exponential scaling in $N$ have been devised \cite{Bellman1962, HeldKarp1962}, solving the TSP is an NP-complete problem (as a consequence of the Hamiltonian cycle problem being NP-complete \cite{Karp1972}), indicating the inherent difficulty in finding the exact TSP solution. This motivates the problem of approximating a TSP solution, i.e. of finding easily computable bounds on $L_{min} (D)$ in either the general or a restricted class of TSPs, which is itself a big field of study. Many of these advances made here \cite{Christofides1976WorstCaseAO, VANBEVERN2020118, Karlin2021, Lawler1985-ch} have only been made possible by additionally imposed structure on the input TSPs, such as e.g. restricting ones attention to \emph{symmetric} TSPs, where one assumes $D_{ij} = D_{ji}$ for all $i,j$, or restricting further to \emph{Euclidean} TSPs, where the matrix-elements $D_{ij}$ all arise as distances between points in Euclidean space.  
\\
\\
The main result of this paper is a lower bound on the TSP solution, which for the symmetric TSP case has the following form
\begin{align} \label{eq:bound pres}
L_{min}(D) \geq \varphi(D)
\end{align}
where $\varphi(D)$ is a certain linear combination of eigenvalues of the matrix $- P_{\boldsymbol{1}^\perp} D P_{\boldsymbol{1}^\perp}$. Here, $\boldsymbol{1}$ always denotes the all-ones vector $\boldsymbol{1} = \left( 1 , ... , 1 \right)^T$, $P_{\boldsymbol{1}^\perp}$ is the projector onto the subspace perpendicular to $\boldsymbol{1}$, i.e. $P_{\boldsymbol{1}^\perp} = I - \frac{1}{N} J$, with $I$ being the identity matrix and $J = \boldsymbol{1} \boldsymbol{1}^T$ the all-ones matrix. This new bound, in the case of symmetric TSPs, is the content of Theorem \ref{thm:main thm} from Section \ref{sec:main result}. One can further make the generalization to the general (i.e. not necessarily symmetric) TSP case. The resulting generalized bound is mentioned in Theorem \ref{prop:general asymmetrical bound} from Section \ref{sec:main result}, but the added level of generalization is not necessary for the subsequent discussions of this paper, which is why an emphasis is put on the much simpler symmetric bound. An immediate property of the new bound is precisely its simplicity -- it only involves performing matrix multiplication and computing eigenvalues (polynomial time operations), which is both of aesthetic and potentially also of practical interest. 

In Section \ref{subsec:properties}, some notable properties of the new bound \eqref{eq:bound pres} are proven. The bound scales the same way as the optimal solution $L_{min}(D)$ under the affine transformations of the TSP distances $D_{ij} \rightarrow \alpha D_{ij} + \beta$, see proposition \ref{lemma:linearity of bound}. Other interesting properties, having to do with the performance of the bound, are related to the fact that the crucial matrix $ - P_{\boldsymbol{1}^\perp} D P_{\boldsymbol{1}^\perp} $ has a well-known geometric meaning -- it appears in a variant of the \emph{Schoenberg criterion} \cite{Schoenberg_1935} (independently proven in \cite{Young_Householder_1938}) providing equivalent conditions for a set of numbers to be realizable as the Euclidean distances between points, see Propositions \ref{prop:positive of euclidean} and \ref{prop:lower bound}.
\\
\\
In Section \ref{sec:calculating bound for examples}, the bound $\varphi(D)$ is computed and compared with $L_{min}(D)$ for various TSP instances. The bound is notably \emph{tight}, i.e. we have $L_{min}(D) = \varphi(D)$, for multiple TSP instances with arbitrarily many cities. This includes the Euclidean TSP where $N$ cities are placed in a uniform circle (example \ref{exmp: TSP circular polygon}) the TSP where all distances are equal (example \ref{exmp: TSP all distances equal}) and TSPs defined by distance matrices associated with different Cayley graphs (examples \ref{exmp:biPartite graph nonHam or} and \ref{expm:dihedral cayley distance graph}). In Section \ref{subsec:real world TSP}, we also compute the new bound for some real-world TSPs, with the number of cities ranging from $17$ to $48$. The lower bound here always lies within $50 \%$ to $70 \%$ of the optimal solution.
\\
\\
An important application of the new bound relates to the \emph{Hamiltonian cycle problem} and \emph{Hamiltonian path problem} in graph theory, see Section \ref{sec:hamiltonian path prob}. Both problems are NP-complete \cite{Karp1972} and lies at the heart of many major conjectures and open problem in graph theory. The new TSP bound $\varphi$ mentioned above straightforwardly implies multiple necessary eigenvalue conditions for a graph to be \emph{Hamiltonian} or \emph{traceable} (i.e. for it to contain Hamiltonian cycles or Hamiltonian paths respectively). These necessary eigenvalue conditions are easy to check (in polynomial time) and, if violated, rules out a graph being Hamiltonian or traceable (as seen in examples \ref{exmp:bowtie graph nonHam}, \ref{exmp:union of clique nonHam}, \ref{exmp:biPartite graph nonHam or}, \ref{exmp:path graph nonHam}), thus establishing more links between Hamiltonian problems and spectral graph theory.

A classical open problem in this arena is whether or not all (undirected) \emph{Cayley graphs} are Hamiltonian, see \cite[sec. 3.7]{GodsilRoyle2001} for more details. A hypothesized positive answer to this problem is sometimes phrased as a variant of the \emph{Lov{\'a}sz conjecture} \cite{Guy1970CombinatorialSA, GouldHamiltonianProb1991, Ber79} (although it appears to have been stated already in 1959 \cite{rapaport1959cayley}), and its truthfulness seems to have divided mathematicians -- see e.g. \cite[p. 25]{Babai1996conj} for another conjecture directly contradicting it. A possible application of the new results mentioned above could be found in the search for counterexamples to this conjecture. To find such a counterexample, one just needs one Cayley graph whose adjacency or distance eigenvalues violates one of the inequalities formulated in Lemmas \ref{lemma:adjacency hamil cycle} and \ref{lemma:hamilton distanceM}. Using results from the representation theory of finite groups, such eigenvalues can be relatively simple to calculate \cite{BABAI1979180, RENTELN2011738, Liu2022-mk}, further increasing the scope of this approach. Although no such Cayley graph has been found (only a very limited number of Cayley graphs are investigated here), we do provide examples of non-trivial and arbitrarily large (un-directed) Cayley graphs which saturates both necessary inequalities for Hamiltonicity mentioned above, see examples \ref{exmp:biPartite graph nonHam or} and \ref{expm:dihedral cayley distance graph}. If the relevant functions of eigenvalues had been any higher for these \emph{almost-counterexamples} it would have proven the Cayley graphs to be non-Hamiltonian, which at least lends some justification to the hope that a non-Hamiltonian Cayley graph could be found in this way is.

\section{Main result} \label{sec:main result}

The main result of this paper is the following lower bound on the optimal solution of a TSP, holding for any symmetric (not necessarily Euclidean or metric) TSP. This bound can, with some added complication, be generalized to hold either for the general TSP with a certain normality condition, see \eqref{eq:asymmetrical theorem}, or to the fully general TSP, see \eqref{eq:asymmetrical theorem nonNormal}, both from Theorem \ref{prop:general asymmetrical bound}. The bound, in essence, follows from a particular application of \emph{von Neumann's trace inequality} (see Appendix \ref{app:doubly stochastoc result}).  

\begin{theorem} \label{thm:main thm}
For any symmetric TSP, defined by its distance matrix $D$, the length $L_{min}(D)$ of the TSP solution is lower bounded by the following function
\begin{align} \label{eq:main thm line}
L_{min} (D)  \geq \varphi (D) \qquad \text{with} \qquad \varphi(D) := \sum_{k=1}^{N-1} c_k \mu_k \ \: ,
\end{align}
where $N$ is the number of cities in the TSP, $\mu_1 \geq \mu_2 \geq ... \geq \mu_{N-1}$ are the eigenvalues (ordered in decreasing order) of $ \left. - P_{\boldsymbol{1}^\perp } D P_{\boldsymbol{1}^\perp } \right|_{\boldsymbol{1}^\perp } $, or alternatively, the eigenvalues of the matrix $- P_{\boldsymbol{1}^\perp } D P_{\boldsymbol{1}^\perp } $, excluding the trivial \ $0$-eigenvalue, and $c_1 \leq c_2 \leq ... \leq c_{N-1}$ is the list of numbers $\left\{ \left( 1 - \cos \left( \frac{2 \pi k}{N} \right) \right) \right\}_{k=1}^{N-1}$, including multiplicities, ordered in increasing order. More specifically, we have
\begin{align} \label{eq:theta odd N}
 \varphi(D) & =  \sum_{k=1}^{\frac{N-1}{2}} \left( 1 - \cos \left( \frac{2 \pi k}{N} \right) \right) \left( \mu_{2 k -1}+\mu_{2k} \right) \qquad \qquad \qquad  \text{if $N$ is odd} \\ \label{eq:theta even N}
 \varphi(D) & =  \sum_{k=1}^{\frac{N}{2}-1} \left( 1 - \cos \left( \frac{2 \pi k}{N} \right) \right) \left( \mu_{2 k -1}+\mu_{2k} \right)  + 2 \mu_{N-1} \qquad \text{if $N$ is even}
\end{align}
\end{theorem}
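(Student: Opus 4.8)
The plan is to reformulate $L_\sigma(D)$ as a trace. First I would encode the roundtrip $\sigma$ by the permutation matrix $P_\sigma$ and the "cyclic shift" permutation matrix $C$ (the matrix of the $N$-cycle $1\to2\to\dots\to N\to 1$), so that the edges of the tour $\sigma$ are exactly the pairs $(\sigma(i),\sigma(i+1))$. A short computation shows
\begin{align}
L_\sigma(D) = \trace\!\bigl( D\, P_\sigma^T C P_\sigma \bigr),
\end{align}
and since $D$ is symmetric we may symmetrize the second factor and write $L_\sigma(D) = \trace\!\bigl( D\, P_\sigma^T \tfrac{C+C^T}{2} P_\sigma\bigr)$, i.e. $L_\sigma(D)=\trace(D M_\sigma)$ where $M_\sigma = P_\sigma^T \tfrac{C+C^T}{2} P_\sigma$ ranges, as $\sigma$ varies, over all conjugates of the fixed symmetric matrix $A:=\tfrac{C+C^T}{2}$ by permutation matrices. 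The eigenvalues of $A$ are $\cos(2\pi k/N)$, $k=0,\dots,N-1$, with the eigenvalue $1$ (at $k=0$) attained on the vector $\boldsymbol 1$; the nonzero-index eigenvalues pair up because $\cos(2\pi k/N)=\cos(2\pi(N-k)/N)$.

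The next step is to project away the $\boldsymbol 1$ direction. Every $M_\sigma$ has $\boldsymbol 1$ as an eigenvector with eigenvalue $1$ (each row of $A$ sums to $1$, conjugation by a permutation preserves this), and every tour contributes the same "diagonal-in-$\boldsymbol1$" piece. Concretely, writing $M_\sigma = P_{\boldsymbol 1^\perp} M_\sigma P_{\boldsymbol 1^\perp} + \tfrac1N J$ (using $M_\sigma\boldsymbol1 = \boldsymbol1$ and symmetry), one gets
\begin{align}
L_\sigma(D) = \trace\!\bigl( D P_{\boldsymbol 1^\perp} M_\sigma P_{\boldsymbol 1^\perp}\bigr) + \tfrac1N \trace(DJ) = \trace\!\bigl( (P_{\boldsymbol1^\perp} D P_{\boldsymbol1^\perp}) M_\sigma\bigr) + \tfrac1N\boldsymbol1^T D\boldsymbol1,
\end{align}
where in the last equality I used that $P_{\boldsymbol1^\perp}$ commutes through and that $P_{\boldsymbol1^\perp}M_\sigma P_{\boldsymbol1^\perp}$ already lives on $\boldsymbol1^\perp$. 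Hmm — this leaves a spurious additive constant $\tfrac1N\boldsymbol1^T D\boldsymbol1$, which should actually cancel. The cleaner bookkeeping is to note $\trace(D P_{\boldsymbol1^\perp} M_\sigma P_{\boldsymbol1^\perp}) = \trace\bigl((-P_{\boldsymbol1^\perp} D P_{\boldsymbol1^\perp})(-P_{\boldsymbol1^\perp} M_\sigma P_{\boldsymbol1^\perp})\bigr)$ and that $-P_{\boldsymbol1^\perp}A P_{\boldsymbol1^\perp}$ restricted to $\boldsymbol1^\perp$ has exactly the eigenvalues $\{-\cos(2\pi k/N)\}_{k=1}^{N-1}$; subtracting off a multiple of the constant and re-centering $A$ as $A' := I - A = I - \tfrac{C+C^T}{2}$, whose eigenvalues on $\boldsymbol1^\perp$ are precisely $c_k = 1-\cos(2\pi k/N)$, all nonnegative, absorbs the constant — $\trace(D A'_\sigma)$ differs from $L_\sigma(D)$ only via $\trace(D\cdot I)=0$ since $D_{ii}=0$. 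So in fact $L_\sigma(D) = -\trace(D A'_\sigma) \cdot(-1)$; more precisely $L_\sigma(D) = \trace\bigl((-P_{\boldsymbol1^\perp}DP_{\boldsymbol1^\perp})\,(P_{\boldsymbol1^\perp}A'_\sigma P_{\boldsymbol1^\perp})\bigr)$ exactly, with both factors now acting on the $(N-1)$-dimensional space $\boldsymbol1^\perp$, the first having eigenvalues $\mu_1\ge\dots\ge\mu_{N-1}$ and the second, for every $\sigma$, having the fixed eigenvalue multiset $\{c_k\}$.

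The final step is the optimization over $\sigma$. I want $\min_\sigma \trace(XY_\sigma)$ where $X := \left.-P_{\boldsymbol1^\perp}DP_{\boldsymbol1^\perp}\right|_{\boldsymbol1^\perp}$ is fixed and $Y_\sigma$ runs over a family of symmetric matrices all sharing the eigenvalue list $\{c_k\}_{k=1}^{N-1}$. By von Neumann's trace inequality (the version for the minimum of $\trace(XY)$ over all $Y$ with prescribed spectrum, which I will take from Appendix \ref{app:doubly stochastoc result}), for any such $Y_\sigma$ we have $\trace(XY_\sigma) \ge \sum_{k} \mu_k\, c_{\pi(k)}$ minimized by pairing the largest $\mu$'s with the smallest $c$'s, i.e. $\trace(XY_\sigma) \ge \sum_{k=1}^{N-1} \mu_k c_k$ with $\mu_1\ge\dots\ge\mu_{N-1}$ and $c_1\le\dots\le c_{N-1}$. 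This gives $L_{min}(D) = \min_\sigma L_\sigma(D) \ge \sum_k c_k\mu_k = \varphi(D)$. The explicit forms \eqref{eq:theta odd N}, \eqref{eq:theta even N} then follow by listing the multiset $\{1-\cos(2\pi k/N)\}_{k=1}^{N-1}$ in increasing order: for $1\le k\le \lfloor (N-1)/2\rfloor$ the value $1-\cos(2\pi k/N)$ occurs twice (indices $k$ and $N-k$), and when $N$ is even the extra value at $k=N/2$ is $1-\cos\pi = 2$, occurring once and largest, hence paired with $\mu_{N-1}$; grouping consecutive equal $c$'s pairs $\mu_{2k-1}+\mu_{2k}$ with the common coefficient.

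The main obstacle I anticipate is getting the reduction to $\boldsymbol1^\perp$ exactly right — i.e. verifying that the constant terms genuinely cancel (using $\trace D = 0$ and $A'\boldsymbol1=0$) so that $L_\sigma(D)$ equals $\trace(XY_\sigma)$ on the nose, and confirming that every tour-conjugate $Y_\sigma = P_{\boldsymbol1^\perp}P_\sigma^T A' P_\sigma P_{\boldsymbol1^\perp}$ really does have the full multiset $\{c_k\}$ as its spectrum on $\boldsymbol1^\perp$ (this needs that $P_\sigma$ fixes $\boldsymbol1$, so conjugation by $P_\sigma$ preserves the splitting $\mathbb R^N = \spn\{\boldsymbol1\}\oplus\boldsymbol1^\perp$ and acts as an orthogonal map on $\boldsymbol1^\perp$). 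Once that is in place, von Neumann's inequality does the rest mechanically, and the odd/even rewriting is just bookkeeping on the cosine values.
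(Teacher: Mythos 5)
Your proof is correct and follows essentially the same route as the paper: write $L_\sigma(D)$ as a trace against a permutation-conjugate of the $N$-cycle matrix, split off the $\boldsymbol{1}$ direction, and apply von Neumann's trace inequality with the cycle eigenvalues $\cos(2\pi k/N)$ paired in opposite order against the $\mu_k$. The only difference is bookkeeping: you re-center the cycle matrix as $I-\tfrac{C+C^T}{2}$ so that everything lives cleanly on $\boldsymbol{1}^\perp$ and the constants cancel via $\trace D=0$, whereas the paper keeps the full $N\times N$ matrices and shifts by $\beta J$ to push the spurious eigenvalue to the bottom of the ordering; both yield the identical bound $\sum_k c_k\mu_k$.
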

Again, $\boldsymbol{1} = \left( 1, ... , 1 \right)^T \in \mathbb{R}^N$ is the all-ones vector, $I$ and $J = \boldsymbol{1} \boldsymbol{1}^T$ are respectively identity matrix and the $N \times N$ all-ones matrix, $\boldsymbol{1}^\perp \subset \mathbb{R}^N$ is the subspace of $\mathbb{R}^N$ orthogonal to $\boldsymbol{1}$, $P_{\boldsymbol{1}^\perp} = I -\frac{1}{N} J$ and $ \left. A \right|_S$ denotes the restriction of a linear map $A$ to a subspace $S$. 

\begin{proof}
Consider an arbitrary symmetric $N$-city TSP with distance matrix $D$. Recall from \eqref{eq:TSP def} in the introduction that, given a labeling $1,2,... , N$ of the TSP cities, we can associate each roundtrip to a permutation $\sigma$, where $\sigma$ then corresponds to the roundtrip $\sigma (1) \rightarrow \sigma (2) \rightarrow ... \rightarrow \sigma (N) \rightarrow \sigma (1)$. Solving the TSP is then equivalent to computing the number $L_{min}(D) = \min_{\sigma \in S_N} L_\sigma (D)$, where $S_N$ denotes the permutations of $(1,2,...,N)$ and $L_\sigma (D)$ is given by $L_\sigma (D) = \sum_{j=1}^N D_{\sigma(j), \sigma(j+1)}$. We shall now prove the new bound \eqref{eq:main thm line}, and hence the theorem, by showing that $L_\sigma (D) \geq \varphi(D)$ holds for all permutations $\sigma$. Note that using the projector $P_{\boldsymbol{1}^\perp} = I - \frac{1}{N} J$ and the fact that all components of $J=\boldsymbol{1} \boldsymbol{1}^T$ equal $1$, i.e. $J_{ij} = 1$ for all $i,j$, we can rewrite the formula \eqref{eq:TSP def} for $L_\sigma (D)$ as 
\begin{align} \allowdisplaybreaks
L_\sigma (D) = \sum_{j=1}^N D_{\sigma(j), \sigma(j+1)} = \sum_{j=1}^N \left( \left( I - \frac{1}{N} J + \frac{1}{N} J \right) D \left( I - \frac{1}{N} J + \frac{1}{N} J \right) \right)_{\sigma(j), \sigma(j+1)} \notag \\ \allowdisplaybreaks
= \sum_{j=1}^N \left(  P_{\boldsymbol{1}^\perp} D P_{\boldsymbol{1}^\perp}  \right)_{\sigma(j), \sigma(j+1)} + \frac{1}{N} \sum_{j=1}^N \left( J D  \right)_{\sigma(j), \sigma(j+1)} + \frac{1}{N} \sum_{j=1}^N \left( D J  \right)_{\sigma(j), \sigma(j+1)} \notag \\ \allowdisplaybreaks
- \frac{1}{N^2} \sum_{j=1}^N \left( J D J \right)_{\sigma(j), \sigma(j+1)} \notag \\ \allowdisplaybreaks
= \sum_{j=1}^N \left(  P_{\boldsymbol{1}^\perp} D P_{\boldsymbol{1}^\perp}  \right)_{\sigma(j), \sigma(j+1)} + \frac{1}{N} \sum_{j,k =1}^N D_{k , \sigma (j+1)}  + \frac{1}{N} \sum_{j,k =1}^N D_{\sigma(j), k} - \frac{1}{N^2} \sum_{j,k,l=1}^N D_{kl} \notag \\ \label{eq:proof:main:l2_4}
=\sum_{j=1}^N \left(  P_{\boldsymbol{1}^\perp} D P_{\boldsymbol{1}^\perp}  \right)_{\sigma(j), \sigma(j+1)} + \frac{1}{N} \sum_{j,k = 1}^N D_{jk}
\end{align}
where we have used that any permutation $\sigma : (1,2,...,N) \rightarrow (1,2,...,N)$ is a bijection, hence summing over $\sigma(j)$, $\sigma(j+1)$ and $j$ as $j$ varies between $1$ and $N$ all amounts to the same thing. We now continue to rewrite $L_\sigma (D)$ by adding and subtracting the matrix $\beta J$ inside the first term in \eqref{eq:proof:main:l2_4}, where $\beta$ is some real number to be specified later
\begin{align}
L_\sigma (D) & = \sum_{j=1}^N \left(  P_{\boldsymbol{1}^\perp} D P_{\boldsymbol{1}^\perp}  \right)_{\sigma(j), \sigma(j+1)} + \frac{1}{N} \sum_{j,k = 1}^N D_{jk} \notag \\
& = \sum_{j=1}^N \left(  P_{\boldsymbol{1}^\perp} D P_{\boldsymbol{1}^\perp} + \beta J - \beta J \right)_{\sigma(j), \sigma(j+1)} + \frac{1}{N} \sum_{j,k = 1}^N D_{jk} \notag \\ \label{eq:proof:main l3_2}
& = \sum_{j=1}^N \left( P_{\boldsymbol{1}^\perp} D P_{\boldsymbol{1}^\perp} + \beta J  \right)_{\sigma(j), \sigma(j+1)} - \beta \sum_{j=1}^N 1  + \frac{1}{N} \sum_{j,k = 1}^N D_{jk} = \sum_{j=1}^N M_{\sigma(j), \sigma(j+1)} + N \left(  \overline{D} - \beta \right)
\end{align}
where we in \eqref{eq:proof:main l3_2} have defined the matrix $M$ and number $\overline{D}$ as follows in order to make what follows more manageable and readable
\begin{align} \label{eq:proof:def of M and D}
M := P_{\boldsymbol{1}^\perp} D P_{\boldsymbol{1}^\perp} + \beta J \quad , \qquad \overline{D} := \frac{1}{N^2} \sum_{j,k = 1}^N D_{j k}
\end{align}
Note that $\overline{D}$ is the average inter-city distance in the TSP, counting the distances from the cities to themselves. We further define the matrix $C_N = \left( (C_N)_{jk} \right)_{1 \leq j,k \leq N}$ to have components 
\begin{align} \label{eq:proof:def of C_N}
\left( C_N \right)_{jk} = \left\{ \begin{matrix}
1 \qquad \text{if} \ \  j-k \equiv \pm 1 \ ( \text{mod} \ N ) \\
0 \qquad \text{otherwise} \ \qquad \qquad \qquad \\ 
\end{matrix}  \right.
\end{align}
(we denote this matrix by $C_N$ since it represents the adjacency matrix for the $N$-cycle graph). By assumption, $D$ is symmetric, and it is easy to check that $J$, hence $P_{\boldsymbol{1}^\perp}$ and also $M$ will all be symmetric, since $M ^T = \left( P_{\boldsymbol{1}^\perp} D P_{\boldsymbol{1}^\perp} + \beta J \right)^T = P_{\boldsymbol{1}^\perp}^T D^T P_{\boldsymbol{1}^\perp}^T + \beta J^T = P_{\boldsymbol{1}^\perp} D P_{\boldsymbol{1}^\perp} + \beta J = M$. The fact that $M$ is symmetric entails  
\begin{align}
\sum_{j=1}^N M_{\sigma(j), \sigma(j+1)} = \frac{1}{2}  \sum_{j=1}^N \left( M_{\sigma(j), \sigma(j+1)} + M_{\sigma(j+1), \sigma(j)} \right) = \frac{1}{2}  \sum_{j=1}^N \left( M_{\sigma(j), \sigma(j+1)} + M_{\sigma(j), \sigma(j-1)} \right) \notag \\
= \frac{1}{2} \sum_{j,k = 1}^N M_{\sigma(j), \sigma(k)} \left( C_N \right)_{kj} = \frac{1}{2} \sum_{j,k , l , m = 1}^N \delta_{\sigma(j) , l} M_{lm} \delta_{m , \sigma(k)} \left( C_N \right)_{kj} = \frac{1}{2} \sum_{j,k = 1}^N \left( \Pi_{\sigma} M \Pi_{\sigma}^T \right)_{jk} \left( C_N \right)_{kj} \notag \\ \label{eq:proof:trace formula step}
= \frac{1}{2} \trace \left( \Pi_{\sigma} M \Pi_{\sigma}^T C_N \right)
\end{align}
where we have introduced the permutation matrix $\Pi_\sigma$ whose components are $\left( \Pi_{\sigma} \right)_{jk} = \delta_{j , \sigma^{-1}(k)} = \delta_{\sigma(j) , k} $. We have also used the definition of the matrix $C_N$ from \eqref{eq:proof:def of C_N}. Inserting the result in \eqref{eq:proof:trace formula step} into \eqref{eq:proof:main l3_2} gives us 
\begin{align} \label{eq:proof:before Birk}
L_\sigma (D) = \sum_{j=1}^N M_{\sigma(j), \sigma(j+1)} + N \left(  \overline{D} - \beta \right) = \frac{1}{2} \trace \left( \Pi_{\sigma} M \Pi_{\sigma}^T C_N \right) +  N \left(  \overline{D} - \beta \right)
\end{align}
So far, all we have done has been to rewrite the expression for $L_\sigma (D)$ to get the result above in \eqref{eq:proof:before Birk}. To proceed and get a lower bound independent of $\sigma$, we shall use \emph{von Neumann's trace inequalty} \cite{Mirsky1975-xo, CARLSSON2021149} (cf. \cite[ch. 27]{Dym2023}), which entails that for Hermitian matrices $A$ and $B$, $\trace (A B)$ is a real number which is lower bounded by $\trace (A B) \geq \sum_{j=1}^N \lambda_j \nu_{N+1-j}$, where the spectra of $A$ and $B$ are ordered in decreasing order (with multiplicities), as $\lambda_1 \geq ... \geq \lambda_N$ and $\nu_1 \geq ... \geq \nu_N$. A trivial generalization of this result (or at least of how the inequality is usually stated and proven \cite{Mirsky1975-xo}) is proven in Appendix \ref{app:doubly stochastoc result}, both for the benefit of the reader, but also as a necessary step if we want to generalize Theorem \ref{thm:main thm} to non-symmetrical TSPs, as done in Theorem \ref{prop:general asymmetrical bound}. This is the content of Lemma \ref{lemma:doubly stochastic}, which states that if $A$ and $B$ are normal matrices, then $\trace (AB)$ must lie in the convex hull of similar vector products of (now complex) eigenvalues with permuted labels.

If we use this result in \eqref{eq:proof:before Birk} above, we see that since both $\Pi_{\sigma} M \Pi_{\sigma}^T$ and $C_N$ are real symmetric matrices (it was argued before that $M$ was symmetric), we must have 
\begin{align} \label{eq:proof:double stoch low B}
\trace \left( \Pi_{\sigma} M \Pi_{\sigma}^T C_N \right) \geq \sum_{j=1}^N \lambda_j \nu_{N+1-j} \ , 
\end{align}
where $\lambda_j$ and $\nu_j$ are respectively the $j$'th largest eigenvalue of $\Pi_{\sigma} M \Pi_{\sigma}^T$ and $C_N$. We now note that since $\Pi_{\sigma}$ is a permutation matrix, hence an orthogonal matrix, $M$ and $\Pi_{\sigma} M \Pi_{\sigma}^T$ must have the same eigenvalues. Recalling how $M$ was defined in \eqref{eq:proof:def of M and D} as $M = P_{\boldsymbol{1}^\perp} D P_{\boldsymbol{1}^\perp} + \beta J$ and noting that $J = \boldsymbol{1} \boldsymbol{1}^T$ and $P_{\boldsymbol{1}^\perp} D P_{\boldsymbol{1}^\perp}$ must have orthogonal support, we see that $M$ has the following eigenvalues: $\beta N$ (with eigenvector $\boldsymbol{1}$), in addition to the $N-1$ eigenvalues of $ \left. P_{\boldsymbol{1}^\perp} D P_{\boldsymbol{1}^\perp} \right|_{\boldsymbol{1}^\perp}$. We also recall how $\{ \mu_j \}_{j=1}^{N-1}$ was defined as the $N-1$ eigenvalues of $ - \left. P_{\boldsymbol{1}^\perp} D P_{\boldsymbol{1}^\perp} \right|_{\boldsymbol{1}^\perp}$ in the theorem. Using this, while choosing $\beta \in \mathbb{R}$ to be sufficiently small such that $\beta N$ will be the smallest eigenvalue of $M$, we now get $\lambda_1 = - \mu_{N-1} \geq \lambda_2 = -\mu_{N-2} \geq ... \geq \lambda_{N-1} = - \mu_1 \geq \lambda_N = \beta N $. Meanwhile, the eigenvalues of $C_N$, defined in \eqref{eq:proof:def of C_N}, can be obtained from noting that $C_N$ is a \emph{cyclic} matrix, meaning that \linebreak $ \boldsymbol{\phi_k} := \left( 1 , \omega^k , \omega^{2k} , ... , \omega^{(N-1) k} \right)^T$ with $\omega := e^{ \frac{ 2 \pi i}{N}}$ is an eigenvector for all $k \in \{ 0 , 1 , ... , N-1 \}$ -- or if we want real eigenvectors, we can take the real and imaginary parts of $\boldsymbol{\phi_k}$, which much also be eigenvectors of $C^{(N)}$, since $C^{(N)}$ is real. The eigenvalue associated with $\boldsymbol{\phi_k}$ is then given by $\omega^k + \omega^{-k} = 2 \cos \left( \frac{2 \pi k}{N} \right)$. Thus, the eigenvalues $\{ \nu_j \}_j$ are given by $\nu_1 = 2 \geq \nu_2 = \nu_3 = 2 \cos \left( \frac{2 \pi }{N} \right) \geq \nu_4 = \nu_5 =  2 \cos \left( \frac{2 \pi }{N} 2 \right) \geq ... $. Note that if $N$ is even, we will have $\nu_N = -2$ and $\nu_N$ is then a simple eigenvalue. If $N$ is odd, we will instead have $\nu_N = \nu_{N-1} = 2 \cos \left( \frac{2 \pi}{N} \frac{N-1}{2} \right)$. Using the obtained expressions for the $\lambda_j$ and $\nu_j$ in \eqref{eq:proof:double stoch low B} and plugging the resulting expression into \eqref{eq:proof:before Birk} gives us
\begin{align}
L_\sigma (D) &  = \frac{1}{2} \trace \left( \Pi_{\sigma} M \Pi_{\sigma}^T C_N \right) +  N \left(  \overline{D} - \beta \right) \notag \\
& \ \ \geq \frac{1}{2} \sum_{j=1}^N \lambda_j \nu_{N+1-j} +  N \left(  \overline{D} - \beta \right) = \frac{1}{2} \left( \sum_{j=1}^{N-1} ( - \mu_j) \nu_{j+1} + 2 N \beta \right) +  N \left(  \overline{D} - \beta \right) \notag \\ \label{eq:proof:almost last line}
& \qquad = -\sum_{j=1}^{N-1} \frac{\nu_{j+1}}{2} \mu_k + N \overline{D}
\end{align}
We now show that $ N \overline{D} = \sum_{j=1}^{N-1} \mu_j$. This follows from how $\overline{D}$ was defined in \eqref{eq:proof:def of M and D} as $\overline{D} := \frac{1}{N^2} \sum_{j,k = 1}^N D_{j k}$, which entails that
\begin{align}
\trace \left( - P_{\boldsymbol{1}^\perp} D P_{\boldsymbol{1}^\perp} \right) & = \trace \left( - \left( I - \frac{1}{N} J \right) D \left( I - \frac{1}{N} J \right) \right) \notag \\
& = - \trace ( D ) + \frac{1}{N} \trace \left( JD \right) + \frac{1}{N} \trace \left( D J \right) -  \frac{1}{N^2} \trace \left( JD J \right) \notag \\ \label{eq:proof:trace of D}
& = 0 + \frac{1}{N} \sum_{j,k=1}^{N} D_{jk} + \frac{1}{N} \sum_{j,k=1}^{N} D_{jk} - \frac{1}{N^2} \sum_{j,k=1}^{N} N D_{jk} =  \frac{1}{N} \sum_{j,k=1}^{N} D_{jk} = N \overline{D}
\end{align}
Meanwhile, $- P_{\boldsymbol{1}^\perp} D P_{\boldsymbol{1}^\perp}$ and $ \left. - P_{\boldsymbol{1}^\perp} D P_{\boldsymbol{1}^\perp} \right|_{\boldsymbol{1}^\perp}$ have the same eigenvalues, except for a trivial $0$-eigenvalue, leading to $\trace \left( - P_{\boldsymbol{1}^\perp} D P_{\boldsymbol{1}^\perp} \right) = \trace \left( \left. - P_{\boldsymbol{1}^\perp} D P_{\boldsymbol{1}^\perp} \right|_{\boldsymbol{1}^\perp} \right) = \sum_{j=1}^{N-1} \mu_j$, which when combined with the result \eqref{eq:proof:trace of D} above proves that $ N \overline{D} = \sum_{j=1}^{N-1} \mu_j$. Inserting this result into \eqref{eq:proof:almost last line} finally gives us 
\begin{align} \label{eq:proof:last line}
L_\sigma (D) \geq -\sum_{j=1}^{N-1} \frac{\nu_{j+1}}{2} \mu_k + N \overline{D} = \sum_{j=1}^{N-1} \left( 1 - \frac{\nu_{j+1}}{2} \right) \mu_j = \sum_{j=1}^{N-1} c_j \mu_j
\end{align}
where $c_j$ is the $j$'th smallest number in the set $ \left\{ 1 - \cos \left( \frac{2 \pi j}{N} \right)  \right\}_{j=1}^{N-1}$, including multiplicities, which follows from $\nu_j$ being the $j$'th largest number in the set $ \left\{ 2 \cos \left( \frac{2 \pi j}{N} \right)  \right\}_{j=0}^{N-1}$, including multiplicities, as discussed earlier. Since \eqref{eq:proof:last line} holds for all permutations $\sigma$, and the solution, or length of the minimum TSP roundtrip is given by $L_{min}(D) = \min_{\sigma \in S_N} L_\sigma (D)$, \eqref{eq:proof:last line} entails $L_{min} (D) \geq \sum_{j=1}^{N-1} c_j \mu_j = \varphi (D)$, which proves the theorem. 
\end{proof}

\subsection{Generalizations to arbitrary TSPs}

Note that the only place in the proof of Theorem \ref{thm:main thm} above where we used that the TSP was symmetric was in \eqref{eq:proof:trace formula step}. If we no longer assume this but merely assumes that the matrix $P_{\boldsymbol{1}^\perp} D P_{\boldsymbol{1}^\perp}$ is normal, then we can use the matrix $S = ( \delta_{j , k+1} )_{1 \leq j,k \leq N}$ instead of $C_N$ in the proof above, which has complex eigenvalues $\left\{ e^{ i \frac{2 \pi k}{N}} \right\}_{k=1}^N$. We can then still use the inequality from Lemma \ref{lemma:doubly stochastic} in this case. Or, we can in general split $D$ up into a Hermitian and anti-Hermitian part as $D = \frac{D + D^T}{2} + i \frac{D-D^T}{2 i}$ and use the inequality from Lemma \ref{lemma:doubly stochastic} twice to get a lower bound for a general TSP. These ways of generalizing Theorem \ref{thm:main thm} are formalized in the following theorem.

\begin{theorem}[generalizing Theorem \ref{thm:main thm} to the asymmetrical TSP] \label{prop:general asymmetrical bound}
For a general (i.e. not necessarily symmetric) TSP with $N \times N$ distance matrix $D$, the following lower bound on $L_{min}(D)$ holds, provided that the matrix $P_{\boldsymbol{1}^\perp} D P_{\boldsymbol{1}^\perp}$ is normal
\begin{align} \label{eq:asymmetrical theorem}
L_{min}(D) \geq \sum_{j=1}^{N-1} \Re  \left( \left(1 - e^{\frac{2 \pi i}{N} j} \right) \varpi_j \right) = \sum_{j=1}^{N-1} 
\begin{pmatrix}
1 - \cos \left( \frac{2 \pi}{N} j \right) \\
\sin \left( \frac{2 \pi}{N} j \right) \\
\end{pmatrix}
\cdot
\begin{pmatrix}
\Re \left( \varpi_j \right) \\
\Im \left( \varpi_j \right) \\
\end{pmatrix}
\end{align}
where $\{ \varpi_j \}_{j=1}^{N-1}$ is the set of (possibly complex) eigenvalues of $ \left. - P_{\boldsymbol{1}^\perp} D P_{\boldsymbol{1}^\perp} \right|_{\boldsymbol{1}^\perp} $, listed with multiplicities and ordered such as to minimize the sum above. 
\\
\\
If we drop normality assumption, we can obtain the following bound, holding for arbitrary TSPs 
\begin{align}\label{eq:asymmetrical theorem nonNormal}
L_{min}(D) \geq \sum_{j=1}^{N-1} \left( 1 - \cos \left( \frac{2 \pi}{N} j \right) \right) \mu_j^{H} + \sum_{j=1}^{N-1}  \sin \left( \frac{2 \pi}{N} j \right) \mu_j^{AH}
\end{align}
where $\mu_j^{H}$ and $\mu_j^{AH}$ are respectively the (real) eigenvalues of the Hermitian and anti-Hermitian parts of $ \left. - P_{\boldsymbol{1}^\perp} D P_{\boldsymbol{1}^\perp} \right|_{\boldsymbol{1}^\perp} $, ordered so as to make the sums above as small as possible. Here, the Hermitian and anti-Hermitian parts are given by $ \left. - P_{\boldsymbol{1}^\perp} \frac{D+D^T}{2} P_{\boldsymbol{1}^\perp} \right|_{\boldsymbol{1}^\perp} $ and $ \left. - P_{\boldsymbol{1}^\perp} \frac{D-D^T}{2 i} P_{\boldsymbol{1}^\perp} \right|_{\boldsymbol{1}^\perp} $ respectively. 
\end{theorem}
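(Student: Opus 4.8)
The plan is to mimic the proof of Theorem~\ref{thm:main thm} step by step, tracking where symmetry of $D$ was used and replacing the relevant ingredients. First I would rerun the computation leading to \eqref{eq:proof:main:l2_4}, which only uses that $\sigma$ is a bijection and never uses $D^T=D$; this gives, for every permutation $\sigma$,
\begin{align}
L_\sigma(D) = \sum_{j=1}^N \bigl( P_{\boldsymbol{1}^\perp} D P_{\boldsymbol{1}^\perp} \bigr)_{\sigma(j),\sigma(j+1)} + \frac{1}{N}\sum_{j,k=1}^N D_{jk}.\notag
\end{align}
As before I add and subtract $\beta J$ inside the first sum and set $M := P_{\boldsymbol{1}^\perp} D P_{\boldsymbol{1}^\perp} + \beta J$, so that $L_\sigma(D) = \sum_{j=1}^N M_{\sigma(j),\sigma(j+1)} + N(\overline{D}-\beta)$. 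The one genuinely new point is that, without symmetry, I cannot symmetrize the sum against $C_N$; instead I write $\sum_{j=1}^N M_{\sigma(j),\sigma(j+1)} = \trace(\Pi_\sigma M \Pi_\sigma^T S^T)$ where $S = (\delta_{j,k+1})_{jk}$ is the cyclic shift, exactly as indicated in the paragraph preceding the theorem. Since $\Pi_\sigma$ is orthogonal, $\Pi_\sigma M \Pi_\sigma^T$ is normal iff $M$ is, and $M$ is normal precisely when $P_{\boldsymbol{1}^\perp} D P_{\boldsymbol{1}^\perp}$ is (the $\beta J$ summand acts on the orthogonal $\boldsymbol{1}$-line and commutes with everything relevant).

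For the normal case \eqref{eq:asymmetrical theorem}, I then apply Lemma~\ref{lemma:doubly stochastic}: for normal $A,B$, $\trace(AB)$ lies in the convex hull of $\sum_j \lambda_{\pi(j)}\nu_j$ over permutations $\pi$, where $\lambda_j,\nu_j$ are the (complex) eigenvalues of $A,B$. Here $B=S$ has eigenvalues $e^{2\pi i k/N}$, $k=0,\dots,N-1$, and $A = \Pi_\sigma M \Pi_\sigma^T$ has eigenvalues $\beta N$ (on $\boldsymbol 1$) together with the $\varpi_j' := -\varpi_j$, i.e. the eigenvalues of $\left.P_{\boldsymbol{1}^\perp}DP_{\boldsymbol{1}^\perp}\right|_{\boldsymbol{1}^\perp}$. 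Taking real parts (the trace is real since $L_\sigma$ is), pairing $\beta N$ with the eigenvalue $e^{2\pi i\cdot 0/N}=1$ of $S$ — which contributes $\Re(\beta N\cdot 1)=\beta N$ and cancels the $-N\beta$ — and using $N\overline D = \sum_{j=1}^{N-1}(-\varpi_j') = -\sum \varpi_j'$ via the trace identity \eqref{eq:proof:trace of D} (which also did not use symmetry), I get $L_\sigma(D) \ge \sum_{j=1}^{N-1}\Re\!\bigl((1-e^{2\pi i j/N})\varpi_j\bigr)$ after relabeling, with the labeling chosen to minimize the right-hand side; minimizing over $\sigma$ on the left preserves the inequality. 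The second equality in \eqref{eq:asymmetrical theorem} is just expanding the complex product into its real and imaginary parts.

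For the non-normal case \eqref{eq:asymmetrical theorem nonNormal}, I split $D = \frac{D+D^T}{2} + i\cdot\frac{D-D^T}{2i}$, conjugate by $P_{\boldsymbol{1}^\perp}$, and note the Hermitian part $H := \left.-P_{\boldsymbol{1}^\perp}\frac{D+D^T}{2}P_{\boldsymbol{1}^\perp}\right|_{\boldsymbol{1}^\perp}$ and anti-Hermitian part are both honest normal (indeed self-adjoint, resp.\ $i\times$self-adjoint) operators. Writing $\trace(\Pi_\sigma M\Pi_\sigma^T S^T)$ and using linearity of the trace in $M$, I apply Lemma~\ref{lemma:doubly stochastic} separately to the Hermitian and anti-Hermitian summands against $S$; the real part of $\trace(H\,\Pi_\sigma^T S^T\Pi_\sigma)$ is bounded below by $\sum (1-\cos(2\pi j/N))\mu_j^H$ with optimal ordering, and the anti-Hermitian piece contributes $\sum\sin(2\pi j/N)\mu_j^{AH}$, giving \eqref{eq:asymmetrical theorem nonNormal} after the same $\beta J$/$N\overline D$ bookkeeping. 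The main obstacle is purely bookkeeping: one must be careful that in \eqref{eq:asymmetrical theorem nonNormal} the two orderings of $\{\mu_j^H\}$ and $\{\mu_j^{AH}\}$ are chosen \emph{independently} (since the two trace inequalities are applied separately), which is why that bound is weaker than \eqref{eq:asymmetrical theorem}; and one should check that the eigenvector $\boldsymbol 1$ of $M$ is consistently matched to the eigenvalue $1$ of $S$ (equivalently $S^T$), which is what makes the $\beta$-terms cancel and leaves a bound independent of $\beta$.
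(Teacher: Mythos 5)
Your proposal is correct and follows exactly the route the paper itself indicates for this theorem (replacing $C_N$ by the cyclic shift and invoking Lemma~\ref{lemma:doubly stochastic} in its normal-matrix form, then splitting into Hermitian and anti-Hermitian parts for the general case), filling in the bookkeeping the paper leaves implicit. The two points you flag are indeed the only delicate ones, and both resolve as you suggest: the $S$ versus $S^T$ indexing is immaterial because their spectra coincide (and the $\varpi_j$ come in conjugate pairs), and the matching of $\beta N$ with the eigenvalue $1$ of $S$ is forced for $\beta$ sufficiently negative, just as in the proof of Theorem~\ref{thm:main thm}.
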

Note that when we are dealing with non-symmetric normal matrices, the bound \eqref{eq:asymmetrical theorem} will in general be better than the more general bound \eqref{eq:asymmetrical theorem nonNormal}.

\subsection{Properties of the new bound} \label{subsec:properties}

We first prove that the new bound $\varphi(D)$ transforms like the real TSP solution $L_{min} (D)$, under the following basic affine transformation of the TSP distances $D_{ij} \rightarrow \alpha D_{ij} + \beta$ for $i \neq j$ (with $\alpha \geq 0$), which can also be denoted $D \rightarrow \alpha D + \beta (J-I)$. 

\begin{prop} \label{lemma:linearity of bound}
For any real numbers $\alpha, \beta \in \mathbb{R}$ with $\alpha \geq 0$, the bound $\varphi(D)$ from Theorem \ref{thm:main thm} (and the generalized bounds from Theorem \ref{prop:general asymmetrical bound}) satisfies
\begin{align*}
\varphi \left( \alpha D + \beta (J-I) \vphantom{1^1} \right) = \alpha \varphi (D) + \beta N 
\end{align*}
\end{prop}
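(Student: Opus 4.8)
The plan is to track how the matrix $-P_{\boldsymbol{1}^\perp} D P_{\boldsymbol{1}^\perp}$, hence its spectrum, moves under $D \mapsto \alpha D + \beta(J-I)$, and then substitute the shifted eigenvalues into the definition of $\varphi$. First I would expand $-P_{\boldsymbol{1}^\perp}\bigl(\alpha D + \beta(J-I)\bigr)P_{\boldsymbol{1}^\perp} = -\alpha P_{\boldsymbol{1}^\perp} D P_{\boldsymbol{1}^\perp} - \beta P_{\boldsymbol{1}^\perp} J P_{\boldsymbol{1}^\perp} + \beta P_{\boldsymbol{1}^\perp} I P_{\boldsymbol{1}^\perp}$, and use $P_{\boldsymbol{1}^\perp}\boldsymbol{1} = 0$ (so that $P_{\boldsymbol{1}^\perp} J P_{\boldsymbol{1}^\perp} = P_{\boldsymbol{1}^\perp}\boldsymbol{1}\boldsymbol{1}^T P_{\boldsymbol{1}^\perp} = 0$) together with $P_{\boldsymbol{1}^\perp}^2 = P_{\boldsymbol{1}^\perp}$ to collapse this to $-\alpha P_{\boldsymbol{1}^\perp} D P_{\boldsymbol{1}^\perp} + \beta P_{\boldsymbol{1}^\perp}$. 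Restricting to $\boldsymbol{1}^\perp$, where $P_{\boldsymbol{1}^\perp}$ acts as the identity, the restricted operator becomes $\alpha\,\bigl(-P_{\boldsymbol{1}^\perp} D P_{\boldsymbol{1}^\perp}\bigr)\big|_{\boldsymbol{1}^\perp} + \beta\,\mathrm{Id}_{\boldsymbol{1}^\perp}$, so its eigenvalues are exactly $\alpha\mu_k + \beta$ for $k = 1,\dots,N-1$. (In the normal case of Theorem \ref{prop:general asymmetrical bound} one checks along the way that $\alpha A + \beta P_{\boldsymbol{1}^\perp}$ is again normal whenever $A = P_{\boldsymbol{1}^\perp} D P_{\boldsymbol{1}^\perp}$ is, using $P_{\boldsymbol{1}^\perp} A = A P_{\boldsymbol{1}^\perp} = A$, so the hypothesis is preserved.)

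Next I would observe that the assumption $\alpha \geq 0$ enters exactly once: it makes $t \mapsto \alpha t + \beta$ non-decreasing, so the decreasing order $\mu_1 \geq \dots \geq \mu_{N-1}$ is inherited by $\alpha\mu_1 + \beta \geq \dots \geq \alpha\mu_{N-1} + \beta$, and hence in $\varphi = \sum_{k=1}^{N-1} c_k \mu_k$ — with the $c_k$ a list depending only on $N$ — each $c_k$ remains paired with the image of the same $\mu_k$. Therefore $\varphi\bigl(\alpha D + \beta(J-I)\bigr) = \sum_{k=1}^{N-1} c_k(\alpha\mu_k + \beta) = \alpha\varphi(D) + \beta\sum_{k=1}^{N-1} c_k$, and it remains only to evaluate $\sum_{k=1}^{N-1} c_k$. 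Since a sum does not see the reordering, $\sum_{k=1}^{N-1} c_k = \sum_{k=1}^{N-1}\bigl(1 - \cos\tfrac{2\pi k}{N}\bigr) = (N-1) - \sum_{k=1}^{N-1}\cos\tfrac{2\pi k}{N} = (N-1) + 1 = N$, where I use that $\sum_{k=0}^{N-1}\cos\tfrac{2\pi k}{N} = \Re\sum_{k=0}^{N-1}\omega^k = 0$. This closes the symmetric case.

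For the generalized bounds the argument is structurally the same, which I would state briefly rather than re-derive: $J - I$ is symmetric, so it contributes $\beta(J-I)$ to the Hermitian (or real) part and nothing to the anti-Hermitian part, giving $\mu_j^{H} \mapsto \alpha\mu_j^{H} + \beta$, $\mu_j^{AH} \mapsto \alpha\mu_j^{AH}$, and $\varpi_j \mapsto \alpha\varpi_j + \beta$; the orderings realizing the minima are unchanged because $\alpha \geq 0$ and because the $\beta$-shift contributes an assignment-independent constant to each sum; and collecting terms yields $\alpha\cdot(\text{old bound}) + \beta\sum_{j=1}^{N-1}\bigl(1 - \cos\tfrac{2\pi j}{N}\bigr) = \alpha\cdot(\text{old bound}) + \beta N$ (the $\sin$-weighted sums multiply only the unshifted anti-Hermitian eigenvalues, so they carry no $\beta$-contribution). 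I do not anticipate a genuine obstacle here; the one point that needs care — and the sole reason the hypothesis $\alpha \geq 0$ appears — is that the spectral shift must preserve the coefficient–eigenvalue pairing, and everything else is the identity $P_{\boldsymbol{1}^\perp} J P_{\boldsymbol{1}^\perp} = 0$ plus the root-of-unity sum $\sum_{k} c_k = N$.
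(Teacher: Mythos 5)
Your proposal is correct and follows essentially the same route as the paper: annihilate $J$ with the projector, identify the restricted operator as $\alpha\left(-P_{\boldsymbol{1}^\perp}DP_{\boldsymbol{1}^\perp}\right)\big|_{\boldsymbol{1}^\perp}+\beta I$, note that $\alpha\geq 0$ preserves the eigenvalue ordering, and use $\sum_{k=1}^{N-1}\left(1-\cos\frac{2\pi k}{N}\right)=N$. Your extra remarks on the root-of-unity sum and on the asymmetric/normal cases only flesh out details the paper leaves implicit.
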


\begin{proof}
Notice that since $P_{\boldsymbol{1}^\perp}$ annihilates $J$, we have
\begin{align*}
\left. - P_{\boldsymbol{1}^\perp} \left( \alpha D + \beta (J-I) \right) P_{\boldsymbol{1}^\perp} \right|_{\boldsymbol{1}^\perp} = \left. - \alpha P_{\boldsymbol{1}^\perp} D P_{\boldsymbol{1}^\perp} + \beta P_{\boldsymbol{1}^\perp}^2 \right|_{\boldsymbol{1}^\perp} = \left. \alpha \left( -  P_{\boldsymbol{1}^\perp} D P_{\boldsymbol{1}^\perp} \right) + \beta I \right|_{\boldsymbol{1}^\perp}
\end{align*}
which shows that under the transformation $D \rightarrow \alpha D + \beta (J-I)$, the eigenvalues $\mu_1 \geq ... \geq \mu_{N-1}$ of $\left. - P_{\boldsymbol{1}^\perp} D P_{\boldsymbol{1}^\perp} \right|_{\boldsymbol{1}^\perp}$ transform into $\alpha \mu_1 + \beta \geq ... \geq \alpha \mu_{N-1} + \beta$. Note that the transformation preserves the order of the eigenvalues due to $\alpha \geq 0$, i.e. if $\mu_j \geq \mu_k$, then we must have $\alpha \mu_j + \beta \geq \alpha \mu_k + \beta$. Inserting the transformed eigenvalues into the definition of $\varphi (D)$ gives us 
\begin{align*}
\varphi \left( \vphantom{1^1} \alpha D + \beta (J-I) \right) = \sum_{j=1}^{N-1} c_j \left( \alpha \mu_j + \beta \right) = \alpha \sum_{j=1}^{N-1} c_j \mu_j + \beta \sum_{j=1}^{N-1} c_j \\
= \alpha \varphi (D) + \beta \sum_{j=1}^{N-1} \left( 1 - \cos \left( \frac{2 \pi j}{N} \right) \right) = \alpha \varphi (D) + \beta N
\end{align*}
which proves the result. 
\end{proof}

It might a priori seem doubtful whether the bound $\varphi(D)$ is close to $L_{min} (D)$, or even positive, if many eigenvalues $\mu_j$ of $ \left. - P_{\boldsymbol{1}^\perp } D P_{\boldsymbol{1}^\perp } \right|_{\boldsymbol{1}^\perp } $ are negative. This worry is put to rest, at least for the Euclidean TSP, by the following result, proving these eigenvalues to be positive in the Euclidean case. As alluded to in the introduction, a crucial ingredient here as a variant of the \emph{Schoenberg criterion} \cite{Schoenberg_1935, Young_Householder_1938}, which states that a real symmetric matrix $E$ with zeros along its diagonal is a \emph{Euclidean distance matrix} (EDM) if and only if $- P_{\boldsymbol{1}^\perp } E P_{\boldsymbol{1}^\perp }$ is positive semi-definite. An $N \times N$ matrix $E$ is an \emph{EDM} if there exists $N$ vectors $\boldsymbol{v}_1 , ... , \boldsymbol{v}_N \in \mathbb{R}^k$ for some $k$, such that the components of $E$ can be expressed as $E_{ij} = \lVert \boldsymbol{v}_i - \boldsymbol{v}_j \rVert^2 $ for all $i,j$, with $\lVert \cdot \rVert$ being the Euclidean norm (notice the conventional square, which differentiates EDMs from Euclidean TSP distance matrices).

\begin{prop} \label{prop:positive of euclidean}
If $D$ is the distance matrix for a Euclidean TSP (in arbitrary dimension), then the number $\mu_1 \geq ... \geq \mu_{N-1}$ from Theorem \ref{thm:main thm}, i.e. the eigenvalues of $ \left. - P_{\boldsymbol{1}^\perp } D P_{\boldsymbol{1}^\perp } \right|_{\boldsymbol{1}^\perp } $, will all be non-negative.  
\end{prop}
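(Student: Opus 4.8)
The plan is to show that the real symmetric matrix $-P_{\boldsymbol{1}^\perp} D P_{\boldsymbol{1}^\perp}$ is positive semi-definite. Since $P_{\boldsymbol{1}^\perp}\boldsymbol{1}=0$ and the range of $P_{\boldsymbol{1}^\perp}$ equals $\boldsymbol{1}^\perp$, this matrix kills $\boldsymbol{1}$ and maps $\boldsymbol{1}^\perp$ into itself, so its restriction $\left. - P_{\boldsymbol{1}^\perp} D P_{\boldsymbol{1}^\perp}\right|_{\boldsymbol{1}^\perp}$ is a symmetric operator on $\boldsymbol{1}^\perp$ whose eigenvalues are precisely $\mu_1\geq\cdots\geq\mu_{N-1}$; positive semi-definiteness of the full matrix therefore forces $\mu_j\geq 0$ for all $j$. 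Because $P_{\boldsymbol{1}^\perp}x=x$ when $x\in\boldsymbol{1}^\perp$, it suffices to prove that $-x^{T}Dx\geq 0$ for every $x\in\mathbb{R}^N$ with $\sum_i x_i=0$. In the terminology of the Schoenberg-type criterion recalled above, this is the statement that the (un-squared) Euclidean distance matrix $D$ is \emph{of negative type}.

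The key step is to exhibit the plain Euclidean distances $D_{ij}=\lVert\boldsymbol{v}_i-\boldsymbol{v}_j\rVert$, with $\boldsymbol{v}_1,\dots,\boldsymbol{v}_N\in\mathbb{R}^k$, as \emph{squared} distances in a (possibly infinite-dimensional) Hilbert space, after which the quoted Schoenberg criterion applies directly. I would combine two classical facts. First, the Euclidean norm is a positive average of absolute values of one-dimensional projections: $\lVert u\rVert=c_k\int_{S^{k-1}}\lvert\langle u,\theta\rangle\rvert\,d\sigma(\theta)$, where $\sigma$ is the uniform surface measure on the unit sphere and $c_k=\big(\int_{S^{k-1}}\lvert\theta_1\rvert\,d\sigma(\theta)\big)^{-1}>0$ (immediate from rotational invariance). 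Second, on the real line one has the pointwise identity $\lvert s-t\rvert=\int_{\mathbb{R}}\big(f_s(r)-f_t(r)\big)^2\,dr$, where $f_s(r):=1$ for $r<s$ and $0$ otherwise, since $f_s-f_t$ takes values in $\{-1,0,1\}$ and is supported on an interval of length $\lvert s-t\rvert$. Fixing a basepoint $\boldsymbol{v}_0$ and defining $\Phi(\boldsymbol{v})\in L^2(S^{k-1}\times\mathbb{R})$ by $\Phi(\boldsymbol{v})(\theta,r):=\sqrt{c_k}\,\big(f_{\langle\boldsymbol{v},\theta\rangle}(r)-f_{\langle\boldsymbol{v}_0,\theta\rangle}(r)\big)$ — the basepoint subtraction being exactly what guarantees membership in $L^2$ while leaving differences unchanged — one gets $\lVert\Phi(\boldsymbol{v}_i)-\Phi(\boldsymbol{v}_j)\rVert_{L^2}^2=c_k\int_{S^{k-1}}\lvert\langle\boldsymbol{v}_i-\boldsymbol{v}_j,\theta\rangle\rvert\,d\sigma(\theta)=\lVert\boldsymbol{v}_i-\boldsymbol{v}_j\rVert=D_{ij}$.

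Writing $w_i:=\Phi(\boldsymbol{v}_i)$, the conclusion is then a one-line Gram-matrix computation: for $x\in\mathbb{R}^N$ with $\sum_i x_i=0$,
\begin{align*}
-x^{T}Dx &=-\sum_{i,j}x_ix_j\lVert w_i-w_j\rVert^2 \\
&=-2\Big(\sum_i x_i\Big)\Big(\sum_j x_j\lVert w_j\rVert^2\Big)+2\Big\lVert\sum_i x_i w_i\Big\rVert^2 = 2\Big\lVert\sum_i x_i w_i\Big\rVert^2\geq 0 ,
\end{align*}
where the two ``diagonal'' terms vanish because $\sum_i x_i=0$. Equivalently, once $D$ is displayed as a squared Hilbertian distance matrix one may simply invoke the Schoenberg criterion verbatim (reducing to finitely many points first, so that the $w_i$ may be taken in $\mathbb{R}^m$). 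Either way, this shows $-P_{\boldsymbol{1}^\perp}DP_{\boldsymbol{1}^\perp}$ is positive semi-definite, hence $\mu_j\geq 0$ for all $j$.

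I expect the second step — manufacturing the embedding $\Phi$ — to be the only real content, everything else being formal; within that step the mild nuisances are integrability as $r\to-\infty$ (handled by the basepoint subtraction) and the appearance of an infinite-dimensional $L^2$ space (irrelevant to the finite-dimensional quadratic form). A slicker alternative I would mention as a remark uses the L\'evy--Khinchin-type representation $\lVert u\rVert=c_k'\int_{\mathbb{R}^k}\frac{1-\cos\langle t,u\rangle}{\lVert t\rVert^{k+1}}\,dt$ with $c_k'>0$: summing against $x_ix_j$ and using $\sum_i x_i=0$ to cancel the constant term, one obtains directly $-x^{T}Dx=c_k'\int_{\mathbb{R}^k}\lVert t\rVert^{-k-1}\big\lvert\sum_i x_i e^{\,i\langle t,\boldsymbol{v}_i\rangle}\big\rvert^2\,dt\geq 0$.
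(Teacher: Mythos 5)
Your proof is correct, but it follows a genuinely different route from the paper's. The paper's own argument is two lines modulo a citation: since $D$ comes from a Euclidean TSP, $D^{\odot 2}$ is an EDM by definition; it then invokes a result quoted from Dattorro that the entrywise square root of an EDM is again an EDM, concludes that $D$ itself is an EDM, and applies the Schoenberg criterion to $D$ to get $-P_{\boldsymbol{1}^\perp}DP_{\boldsymbol{1}^\perp}\succeq 0$. What you do instead is prove from scratch precisely the fact the paper outsources, namely that the un-squared Euclidean norm is a conditionally negative definite kernel: your embedding $\Phi$ into $L^2(S^{k-1}\times\mathbb{R})$ realizes $\lVert\boldsymbol{v}_i-\boldsymbol{v}_j\rVert$ as a squared Hilbert-space distance, and the Gram-matrix computation (or one further application of Schoenberg, after restricting to the finite-dimensional span of the $w_i$) finishes the job. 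All the ingredients check out: the reduction to showing $-x^TDx\ge 0$ on $\boldsymbol{1}^\perp$ is legitimate because $P_{\boldsymbol{1}^\perp}$ is the orthogonal projector onto that subspace; the one-dimensional identity $|s-t|=\int(f_s-f_t)^2\,dr$ and the spherical averaging formula for $\lVert u\rVert$ are both correct; the basepoint subtraction does secure membership in $L^2$ without affecting differences; and the two diagonal terms in the quadratic form vanish exactly because $\sum_i x_i=0$. The trade-off is length versus self-containedness: the paper's proof is shorter but rests on an external nontrivial fact (essentially Schoenberg's theorem that $E^{\odot\alpha}$ is an EDM for $0\le\alpha\le 1$, at $\alpha=1/2$), whereas yours supplies an explicit, elementary witness for that fact in the case needed. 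Your closing L\'evy--Khinchin remark is a standard, equally valid alternative derivation of the same negative-type property.
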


\begin{proof}
As mentioned above, it follows from the Schoenberg criterion that if $D$ is a Euclidean TSP, equivalent to saying that $D^{\odot 2}$ is an EDM, then $- P_{\boldsymbol{1}^\perp } D^{\odot 2} P_{\boldsymbol{1}^\perp }$ and hence $ \left. - P_{\boldsymbol{1}^\perp } D^{\odot 2} P_{\boldsymbol{1}^\perp } \right|_{\boldsymbol{1}^\perp } $ will be positive semi-definite. Here, $D^{\odot 2} = D \odot D$ with $\odot $ being the Hadamard product, i.e. $\left( A \odot B \right)_{ij} = A_{ij} B_{ij}$. It is shown in \cite[sec. 5.10]{Dattorro2008-if} that if $E$ is an EDM, then so is $\sqrt[\odot]{E}$, with $\sqrt[\odot]{E}$ given by $\left( \sqrt[\odot]{E} \right)_{ij} = \sqrt{E_{ij}}$ (the converse implication meanwhile does not hold in general). Thus, if $D$ is the distance matrix from a Euclidean TSP, since $D^{\odot 2}$ is then a EDM, then so is $D$, meaning that $- P_{\boldsymbol{1}^\perp } D P_{\boldsymbol{1}^\perp }$ and hence $ \left. - P_{\boldsymbol{1}^\perp } D P_{\boldsymbol{1}^\perp } \right|_{\boldsymbol{1}^\perp } $ will be positive semi-definite, i.e. have non-negative eigenvalues. 
\end{proof}
By the converse of the result above, it follows that given some TSP of unknown type, if $- P_{\boldsymbol{1}^\perp } D P_{\boldsymbol{1}^\perp }$ has negative eigenvalues, then we know that it cannot be a Euclidean TSP. Further, Proposition \ref{prop:positive of euclidean} straightforwardly entails the following result, showing that for Euclidean TSPs, $\varphi(D) / L_{min}(D)$ cannot be arbitrarily small, for fixed $N$. 
\begin{prop} \label{prop:lower bound}
For any Euclidean TSP, we are guaranteed that
\begin{align*} 
\varphi(D) \geq \left( 1 - \cos \left( \frac{2 \pi}{N} \right) \right) \sum_{j=1}^{N-1} \mu_j = N \left( 1 - \cos \left( \frac{2 \pi}{N} \right) \right) \overline{D} \geq \frac{N-1}{N} \left( 1 - \cos \left( \frac{2 \pi}{N} \right) \right) L_{min} (D)
\end{align*}
where $\overline{D} = \frac{1}{N} \trace \left(- P_{\boldsymbol{1}^\perp} D P_{\boldsymbol{1}^\perp } \right) = \frac{1}{N^2} \sum_{i,j=1}^N D_{ij}$. 
\end{prop}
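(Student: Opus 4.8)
The plan is to verify the chain of (in)equalities from left to right: the first inequality and the middle equality follow directly from results already established, and only the last inequality $N(1-\cos(2\pi/N))\overline{D}\geq\frac{N-1}{N}(1-\cos(2\pi/N))L_{min}(D)$ needs a genuine argument. For the first inequality, recall $\varphi(D)=\sum_{k=1}^{N-1}c_k\mu_k$ with $c_1\leq\cdots\leq c_{N-1}$ the numbers $\{1-\cos(2\pi k/N)\}_{k=1}^{N-1}$ in increasing order. The minimum of $1-\cos(2\pi k/N)$ over $k\in\{1,\dots,N-1\}$ equals $1-\cos(2\pi/N)$, attained at $k=1$ and $k=N-1$ (since $2\pi(N-1)/N=2\pi-2\pi/N$), so $c_k\geq 1-\cos(2\pi/N)$ for every $k$. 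Since $D$ is Euclidean, Proposition \ref{prop:positive of euclidean} gives $\mu_k\geq 0$ for all $k$, whence $c_k\mu_k\geq(1-\cos(2\pi/N))\mu_k$, and summing over $k$ yields $\varphi(D)\geq(1-\cos(2\pi/N))\sum_{k=1}^{N-1}\mu_k$. The middle equality is a restatement of a computation already made inside the proof of Theorem \ref{thm:main thm}: equation \eqref{eq:proof:trace of D}, together with the fact that $-P_{\boldsymbol{1}^\perp}DP_{\boldsymbol{1}^\perp}$ and its restriction to $\boldsymbol{1}^\perp$ have the same spectrum apart from an extra $0$, gives $\sum_{k=1}^{N-1}\mu_k=\trace(-P_{\boldsymbol{1}^\perp}DP_{\boldsymbol{1}^\perp})=N\overline{D}$ with $\overline{D}=\frac{1}{N^2}\sum_{i,j}D_{ij}$; multiplying by the nonnegative number $1-\cos(2\pi/N)$ gives the claimed identity, and dividing the trace identity by $N$ gives the stated alternative expression for $\overline{D}$.

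For the last inequality it suffices, after cancelling the common nonnegative factor $1-\cos(2\pi/N)$, to prove $N^2\overline{D}=\sum_{i,j=1}^N D_{ij}\geq(N-1)L_{min}(D)$. I would obtain this by averaging the roundtrip length over all of $S_N$: writing $\sum_{\sigma\in S_N}L_\sigma(D)=\sum_{i=1}^N\sum_{\sigma\in S_N}D_{\sigma(i),\sigma(i+1)}$ and noting that, for each fixed $i$ and each ordered pair $(a,b)$ with $a\neq b$, exactly $(N-2)!$ permutations satisfy $\sigma(i)=a$ and $\sigma(i+1)=b$ (with the index $i+1$ read cyclically, $\sigma(N+1)=\sigma(1)$), the double sum equals $N(N-2)!\sum_{a\neq b}D_{ab}=N(N-2)!\sum_{i,j}D_{ij}$, using $D_{ii}=0$. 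On the other hand $L_\sigma(D)\geq L_{min}(D)$ for each of the $N!$ permutations, so $\sum_{\sigma}L_\sigma(D)\geq N!\,L_{min}(D)$; combining the two gives $\sum_{i,j}D_{ij}\geq\frac{N!}{N(N-2)!}L_{min}(D)=(N-1)L_{min}(D)$, as needed. (Equivalently one may average over the $(N-1)!/2$ distinct undirected roundtrips, each fixed edge occurring in $(N-2)!$ of them; for $N$ odd this is just the statement that $K_N$ is an edge-disjoint union of $(N-1)/2$ Hamiltonian cycles.)

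I do not expect a real obstacle here. The only step carrying any content is the counting identity for $\sum_{\sigma}L_\sigma(D)$, which is routine once one treats the cyclic case $i=N$ on the same footing as $i<N$; beyond that one must only keep track of the direction of the inequalities and of the fact that $1-\cos(2\pi/N)\geq 0$, so that multiplying or dividing the chain through by this factor is harmless.
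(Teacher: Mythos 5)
Your proof is correct. The paper states this proposition without proof (asserting only that it follows ``straightforwardly'' from Proposition \ref{prop:positive of euclidean}), and your argument --- bounding each coefficient $c_k$ below by its minimum $1-\cos\left(\frac{2\pi}{N}\right)$, invoking the nonnegativity of the $\mu_k$ from Proposition \ref{prop:positive of euclidean}, reusing the trace identity \eqref{eq:proof:trace of D} for the middle equality, and the standard tour-averaging count $\sum_{\sigma\in S_N}L_\sigma(D)=N(N-2)!\sum_{i,j}D_{ij}\geq N!\,L_{min}(D)$ for the final inequality --- is a complete and correct filling-in of exactly the intended steps.
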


\section{Computing the new bound \texorpdfstring{$\varphi$}{theta} for various TSP cases} \label{sec:calculating bound for examples}

We here compute the bound $\varphi(D)$ from Theorem \ref{thm:main thm} for some families of idealized TSPs before turning to some real world TSP instances in Section \ref{subsec:real world TSP}. We start with cases where the bound is tight, i.e. for which $L_{min}(D) = \varphi(D)$.

\begin{exmp}[Completely uniform TSP] \label{exmp: TSP all distances equal}
Consider a TSP where all distances equal $1$, i.e. $D_{ij} = 1$ for $i \neq j$. We here get
\begin{align*}
D=J-I \ , \qquad L_{min}(D)= N , \qquad \varphi(D) = N
\end{align*}
The result $L_{min}(D)= N$ is obvious, while $\varphi(D)=N$ follows from Proposition \ref{lemma:linearity of bound} with $\alpha=0$, $\beta = 1$.
\end{exmp}

\begin{exmp}[Uniform circle TSP] \label{exmp: TSP circular polygon}
Consider the Euclidean TSP where $N \geq 2$ cities are placed at uniform intervals on a circle in the plane, such that the distances between a city and its nearest neighbors equal $1$. Alternatively, the cities are represented by the vertices of a regular $N$-gon of side length $1$. The coordinates $(x_i , y_i)$, $i \in \{0 , ... , N-1\}$, of the $i$'th city can be chosen as $\left( x_i , y_i \right) = \left( \frac{\cos \left( \frac{2 \pi i}{N} \right) }{2 \sin \left( \frac{\pi}{N} \right)} , \frac{\sin \left( \frac{2 \pi i}{N} \right)}{2 \sin \left( \frac{\pi}{N} \right) } \right)$. For this TSP, we get
\begin{align*}
D_{ij} = \frac{\sin \left( \frac{\pi}{N} |i-j| \right)}{\sin \left( \frac{\pi}{N} \right)} \ , \qquad L_{min}(D)= N , \qquad \varphi(D) = N
\end{align*}
The result $L_{min}(D) = N$ is not difficult to show. The much less obvious result $\varphi(D) = N$ follows from a lengthy calculation which is performed in Appendix \ref{app:computing for circular TSP}. 

\end{exmp}

We now consider TSPs for which the bound $\varphi(D)$ is not tight. This shows that the bound $\varphi(D)$ can perform poorly, in the sense that $\frac{\varphi(D)}{L_{min}(D)}$ can be small. However, we shall at the same time compare $\varphi(D)$ with the trivial lower bound on $L_{min}(D)$ obtained from summing the average distance from each city to its two nearest neighbors, which we shall here call $n_2 (D)$, i.e. $n_2 (D) = \frac{1}{2} \sum_{j=1}^N \min \left\{ D_{ja}+D_{jb} \ \middle| \ 1 \leq a < b \leq N ,  \ a , b \neq j \right\}$. We shall see that $\varphi(D)$ perform much better (actually arbitrarily much better) than $n_2(D)$ for some TSPs.

\begin{exmp}[Cities placed in a line] \label{exmp: line TSP euclidean}
Consider the Euclidean TSP where $N$ cities are placed in a line with equal spacing $1$ between them. For this TSP, we get 
\begin{align*}
D_{ij} = |i-j|
\end{align*}
It is clear that $L_{min}(D) = 2(N-1)$. Below, we have calculated $\varphi(D)$ for $N=10, \ 20, \ 30$ with the results being
\begin{align*}
 N=10 \ : & \qquad \varphi(D) = 13.052 = 0.725\times L_{min}(D) \\
 N=20 \ : & \qquad \varphi(D) = 23.907 = 0.629 \times L_{min}(D) \\
 N=30 \ : & \qquad \varphi(D) = 34.363 = 0.592 \times L_{min}(D)
\end{align*}
\end{exmp}
Even though $\varphi(D)$ is less than $L_{min}(D)$ in the example above, it is always larger than the trivial \emph{2-nearest neighbors bound} $n_2(D)$, which here equals $n_2 (D) = N+1$. In the following example, we shall encounter a Euclidean TSP for which $\varphi(D)$ is infinitely much better than $n_2(D)$, in the sense that $n_2(D)=0$ while $\varphi(D) > 0$.

\begin{exmp}[2 clusters TSP]\label{exmp:2cluster tsp}
Consider a $2N$-city Euclidean TSP ($N \geq 3$) where $N$ of the cities are placed at the same point $p_1$ and the other $N$ cities are also the same point $p_2$ -- different from $p_1$. Let the difference $\lVert p_1 - p_2 \rVert$ between $p_1$ and $p_2$ be $1$. For this TSP, we get
\begin{align*}
D = \left( 
\begin{array}{c|c} 
  0 & J \\ 
  \hline 
  J & 0 
\end{array} 
\right) \ , \qquad L_{min} (D) = 2 \ , \qquad \varphi(D) = \left( 1 - \cos \left( \frac{\pi}{N} \right) \right) N
\end{align*}
where $J$ and $0$ are here respectively the $N \times N$ all ones and all zeros matrix. $L_{min}(D) = 2$ is easy to show, and the value for $\varphi(D)$ follows from $ \left. - P_{\boldsymbol{1}^\perp} D P_{\boldsymbol{1}^\perp} \right|_{\boldsymbol{1}^\perp} $ having $2N-2$ eigenvalues equal to $0$ and $1$ eigenvalue equal to $N$, corresponding to the vector $ \left( \begin{array}{c}
\boldsymbol{1} \\
\hline
-\boldsymbol{1} \\
\end{array} \right)$ where $\boldsymbol{1}$ is the all ones vector in $\mathbb{R}^N$. 
\end{exmp}
The example above unfortunately shows that even if we restrict to the Euclidean TSP, the bound $\varphi(D)$ cannot be lower bounded by some uniform constant (independent of $N$) times TSP solution $L_{min} (D)$, since we in the example above have $\frac{\varphi(D)}{L_{min} (D)} = \frac{\pi^2}{4 N} + O \left( 1/ N^3 \right)$. However, the new bound still fares much better than the trivial $2$-nearest neighbors bound bound $n_2(D)$, which here simply equals $n_2(D) = 0$.

\subsection{Real world TSP instances} \label{subsec:real world TSP}

\begin{table}[ht]
\centering
\begin{tabular}{|| c | c | c | c ||} 
 \hline
 TSP & Number of cities & $ \varphi (D) / L_{min} (D)$ & $- P_{\boldsymbol{1}^\perp } D P_{\boldsymbol{1}^\perp} \geq 0$ \\ [1ex] 
 \hline\hline
 $gr17$ & 17 & 0.591 &  \checkmark \\ [0.5ex]
 \hline
 $fri26$ & 26 & 0.662 &  \checkmark \\ [0.5ex]
 \hline
 $bays29$ & 29 & 0.565 & \textsf{X} \\ [0.5ex]
 \hline
 $P\&G33$ & 33 & 0.564 & \textsf{X} \\ [0.5ex]
 \hline
 $dantzig42$ & 42 & 0.509 & \textsf{X} \\ [0.5ex]
 \hline
 $swiss42$ & 42 & 0.684 & \checkmark \\ [0.5ex]
 \hline
 $att48$ & 48 & 0.654 & \checkmark \\ [0.5ex] 
 \hline
\end{tabular}
{ \tiny
\caption{ The ratio $ \varphi (D) / L_{min} (D)$ along with other properties calculated for various TSPs taken from \cite{Reinelt1991-mi}, except for $P \& G 33$. Given the TSP distance matrix $D$, the eigenvalues of $-P_{\boldsymbol{1}^\perp} D P_{\boldsymbol{1}^\perp}$ and subsequently the bound $\varphi(D)$ has been calculated in MATLAB.}  \label{table:real world TSP data} }
\end{table}

We now compute the lower bound $\varphi(D)$ from Theorem \ref{thm:main thm} for some real world TSP instances. In table \ref{table:real world TSP data}, the values of the fraction $\frac{\varphi(D)}{L_{min}(D)}$, indicating how good the lower bound is, are given along with other TSP properties for some collection of real world TSPs. All TSPs, except $P \& G 33$, are taken from the \emph{traveling salesman problem library} \cite{Reinelt1991-mi} (cf. \cite{Reinelt1994-nk, ApplegateTSP2006}), and they are named according to how they appear there. $P \& G 33$ appeared in a competition where the company Procter and Gamble offered 10.000\$ for a solution to the TSP back in 1962, see \cite{Karg1964-ks} for further details (cf. \cite{ApplegateTSP2006}).

Although the sample size is here obviously far too small to make any general conclusions, it can still be noted in table \ref{table:real world TSP data} that the ratio $\frac{\varphi(D)}{L_{min}(D)}$ does not appear to depend too much on the number of cities, but rather appears to be more dependent on whether or not we have $-P_{\boldsymbol{1}^\perp} D P_{\boldsymbol{1}^\perp} \geq 0$.

\section{Application to the Hamiltonian cycle and path problems} \label{sec:hamiltonian path prob}

In this section, we shall discuss the graph theoretic application of the new bound from Theorem \ref{thm:main thm} to the \emph{Hamiltonian cycle} and \emph{Hamiltonian path problem} -- that is, given a graph $G$, the problems of determining whether $G$ contains a \emph{Hamiltonian cycle} or a \emph{Hamiltonian path} which is, respectively, an edge-cycle or edge-path visiting each vertex in $G$ exactly once (see e.g. \cite[sec. 3.6]{GodsilRoyle2001} for more details). Lemmas \ref{lemma:adjacency hamil cycle} and \ref{lemma:hamilton distanceM} below follows as Corollaries of Theorem \ref{thm:main thm}, and they both provide inequalities that the eigenvalues of different matrices obtained from $G$ must satisfy if the graph is \emph{Hamiltonian} (contains a Hamiltonian cycle) or \emph{traceable} (contains a Hamiltonian path). Conversely, if one of these inequalities is violated by some graph $G$, we can rule out $G$ being Hamiltonian or traceable. To illustrate this, multiple examples are provided below of graphs being proven non-Hamiltonian or non-traceable in this way. We also provide families of non-trivial Cayley graphs, in Examples \ref{exmp:biPartite graph nonHam or} and \ref{expm:dihedral cayley distance graph}, for which these inequalities are \emph{almost} violated -- they are saturated, which could justify the hope that a counterexample to the conjecture that all (undirected) Cayley graphs are Hamiltonian could be found in this way. 
\\
\\
But first, we briefly review some standard graph theoretic preliminaries. Given a (simple) graph $G$, its \emph{adjacency matrix} $A_G$ has components $\left( A_G \right)_{ij} = 1$ if $i \sim j$ and $\left( A_G \right)_{ij} = 0$ if $i \not\sim j$, where $i \sim j$ means that $G$ contains an edge form $i$ to $j$. The \emph{distance matrix} $D_G$ of $G$ has components $\left( D_G \right)_{ij}$ equaling the length of the shortest edge-path from vertex $i$ to vertex $j$ (equaling $0$ if $i=j$), i.e. $\left( D_G \right)_{ij} = \min \left\{ l-1 \ \middle| \ l \in \mathbb{N}, \ \exists v_1, ... , v_l : \ v_1=i, \ v_l=j , \ \forall_{1 \leq k \leq l-1} v_{k} \sim v_{k+1}   \right\}$. Since we in the following only deal with un-directed graphs, $A_G$ and $D_G$ will be real symmetric matrices. However, all following results can easily be generalized to directed graphs by use of Theorem \ref{prop:general asymmetrical bound}. The \emph{complement} $\overline{G}$ of a graph $G$ is obtained by interchanging edges with non-edges in $G$. A graph $G$ is said to be \emph{regular} if each vertex has the same number of neighbors, or equivalently, if $\boldsymbol{1}$ is an eigenvector of $A_G$. Similarly, $G$ is said to be \emph{transmission regular} if the sum of distances from a vertex to all other vertices is constant, or equivalently, if $\boldsymbol{1}$ is an eigenvector of $D_G$. 

Lastly, for a finite group $\mathcal{G}$ with a generating set $S$, the \emph{Cayley graph} $G = \Gamma ( \mathcal{G} , S)$ has vertex set $V(G)$ labeled by the elements in $\mathcal{G}$, and for any $g,h \in V(G)$, $g \sim h$ iff $ h \circ g^{-1} \in S$ (sometimes, it is not required that $\mathcal{G}$ should be finite or $S$ should be a generating set, but the definition given here is the relevant one for the variant of the Lov{\'a}sz conjecture stated in the introduction). Since we are only interested in un-directed Cayley graphs, we can assume $S = S^{-1}$, i.e. $g \in S \Rightarrow g^{-1} \in S$. It follows from the definition given here that any Cayley graph must be regular, transmission regular and connected.

\subsection{Necessary adjacency eigenvalue conditions for Hamiltonicity and traceability} \label{subsec:adjacency eig necessary condition}
 
The following lemma provides necessary conditions on eigenvalues obtained from the adjacency matrix of a graph for it to be Hamiltonian or traceable via the function $\varphi$ defined in Theorem \ref{thm:main thm}.

\begin{lemma} \label{lemma:adjacency hamil cycle}
For any simple graph $G$, if $G$ is Hamiltonian, then we must have $\varphi \left( A_{\overline{G}} \right) \leq 0$, and if $G$ is traceable, then we must have $\varphi \left( A_{\overline{G}} \right) \leq 1$. Note that $\varphi \left( A_{\overline{G}} \right)$ can always be expressed as
\begin{align} \label{eq:bound complement simpify}
\varphi \left( A_{\overline{G}} \right) = N + \varphi(- A_G) \ \ ,
\end{align}
and if $G$ is a regular graph, $\varphi \left( A_{\overline{G}} \right)$ further simplifies to
\begin{align} \label{eq:adjacency bound regular}
\varphi \left( A_{\overline{G}} \right) = N + \sum_{j=1}^{N-1} c_j \lambda_{j+1}
\end{align}
where $c_j$ is the $j$'th smallest number from the set $\left\{ \left( 1 - \cos \left( \frac{2 \pi k}{N} \right) \right) \right\}_{k=1}^{N-1}$ (with multiplicities), and $\lambda_j$ is the $j$'th largest eigenvalue of the adjacency matrix $A_G$.
\end{lemma}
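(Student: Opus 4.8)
The plan is to reduce everything to Theorem \ref{thm:main thm} by recognizing that a Hamiltonian cycle in $G$ is exactly a TSP roundtrip of length $N$ in the distance matrix $A_{\overline{G}}$, and a Hamiltonian path is a ``broken'' roundtrip. First I would observe that if $\sigma$ is a permutation whose associated cyclic order $\sigma(1)\to\sigma(2)\to\cdots\to\sigma(N)\to\sigma(1)$ is a Hamiltonian cycle of $G$, then every consecutive pair $\sigma(j),\sigma(j+1)$ is an edge of $G$, hence a non-edge of $\overline{G}$, so $\bigl(A_{\overline{G}}\bigr)_{\sigma(j),\sigma(j+1)}=0$ for all $j$ and therefore $L_\sigma(A_{\overline{G}})=0$. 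Consequently $L_{min}(A_{\overline{G}})=0$ (each summand is $0$ or $1$, so the minimum cannot be negative), and Theorem \ref{thm:main thm} gives $\varphi(A_{\overline{G}})\le L_{min}(A_{\overline{G}})=0$. For the traceable case, if $G$ has a Hamiltonian path $v_1,v_2,\dots,v_N$, take $\sigma$ with $\sigma(j)=v_j$; then all $N-1$ pairs $\sigma(j),\sigma(j+1)$ for $j=1,\dots,N-1$ are edges of $G$, so those summands of $L_\sigma(A_{\overline{G}})$ vanish, while the single wrap-around term $\bigl(A_{\overline{G}}\bigr)_{\sigma(N),\sigma(1)}$ is at most $1$. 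Hence $L_{min}(A_{\overline{G}})\le 1$, and again Theorem \ref{thm:main thm} yields $\varphi(A_{\overline{G}})\le 1$.

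Next I would prove the rewriting \eqref{eq:bound complement simpify}. Since $\overline{G}$ has adjacency matrix $A_{\overline{G}}=J-I-A_G$ (off-diagonal entries flip, diagonal stays $0$), I can write $A_{\overline{G}} = (-A_G) + 1\cdot(J-I)$, i.e.\ it is the affine transform $\alpha D+\beta(J-I)$ of $D=-A_G$ with $\alpha=1$, $\beta=1$. Proposition \ref{lemma:linearity of bound} then gives directly $\varphi(A_{\overline{G}}) = \varphi(-A_G) + 1\cdot N = N + \varphi(-A_G)$, which is \eqref{eq:bound complement simpify}.

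Finally, for the regular case I would unwind the definition of $\varphi(-A_G)$. If $G$ is $d$-regular then $\boldsymbol{1}$ is an eigenvector of $A_G$ with eigenvalue $d$, so $P_{\boldsymbol{1}^\perp}$ commutes with $A_G$ on $\boldsymbol{1}^\perp$ and the operator $\left.-P_{\boldsymbol{1}^\perp}(-A_G)P_{\boldsymbol{1}^\perp}\right|_{\boldsymbol{1}^\perp} = \left.A_G\right|_{\boldsymbol{1}^\perp}$ has as its eigenvalues precisely $\lambda_2\ge\lambda_3\ge\cdots\ge\lambda_N$, the eigenvalues of $A_G$ other than $\lambda_1=d$. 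Plugging $\mu_j=\lambda_{j+1}$ into $\varphi(-A_G)=\sum_{j=1}^{N-1}c_j\mu_j$ gives $\varphi(-A_G)=\sum_{j=1}^{N-1}c_j\lambda_{j+1}$, and combining with \eqref{eq:bound complement simpify} produces \eqref{eq:adjacency bound regular}. (One should double-check that the Perron eigenvalue $d$ is indeed the largest, which holds for connected $d$-regular $G$, and more generally $\lambda_1=d$ is always the top eigenvalue of a $d$-regular graph, so the labelling $\mu_j=\lambda_{j+1}$ is the correct decreasing order.) The only mildly delicate point — and the ``hard part'' such as it is — is bookkeeping the eigenvalue orderings: making sure that removing the $\boldsymbol{1}$-eigenvalue from $-P_{\boldsymbol{1}^\perp}(-A_G)P_{\boldsymbol{1}^\perp}$ leaves exactly the top $N-1$ of the remaining eigenvalues in the order demanded by Theorem \ref{thm:main thm}; everything else is a direct citation of Theorem \ref{thm:main thm} and Proposition \ref{lemma:linearity of bound}.
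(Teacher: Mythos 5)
Your proposal is correct and follows essentially the same route as the paper: reduce to Theorem \ref{thm:main thm} via the TSP with distance matrix $A_{\overline{G}}$ (Hamiltonian cycle gives $L_{min}=0$, Hamiltonian path gives $L_{min}\le 1$), obtain \eqref{eq:bound complement simpify} from $A_{\overline{G}}=J-I-A_G$ and Proposition \ref{lemma:linearity of bound}, and obtain \eqref{eq:adjacency bound regular} from $\boldsymbol{1}$ being an eigenvector of $A_G$ in the regular case. Your extra remarks (the wrap-around term being at most $1$, and $\lambda_1=d$ being the top eigenvalue so that $\mu_j=\lambda_{j+1}$ is correctly ordered) are accurate and only make explicit what the paper leaves implicit.
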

\begin{proof}
Consider the TSP with distance matrix $A_{\overline{G}}$. If $G$ is Hamiltonian, there will exist an optimal TSP roundtrip in $G$ where each vertex is adjacent to the subsequent vertex and the TSP distance between them is thus $0$, giving us $L_{min} \left( A_{\overline{G}} \right) = 0$. By Theorem \ref{thm:main thm}, we now get $0 = L_{min} \left( A_{\overline{G}} \right) \geq \varphi \left( A_{\overline{G}} \right)$, proving the necessary condition for Hamiltonicity. Similarly, if $G$ is traceable, there will exist a path in $G$ where each vertex is adjacent to the subsequent one. By adding a walk from the last vertex in the path to the first vertex, we end up with a TSP roundtrip of total length $0$ or $1$, and by again using Theorem \ref{thm:main thm}, we get $1 \geq L_{min} \left( A_{\overline{G}} \right) \geq \varphi \left( A_{\overline{G}} \right)$, proving the necessary condition for traceability.  

We next prove the results \eqref{eq:bound complement simpify} and \eqref{eq:adjacency bound regular}. For any graph $G$, we have $A_{\overline{G}} = J - I - A_G$, and using the result from Proposition \ref{lemma:linearity of bound}, we get $\varphi \left( A_{\overline{G}} \right) = \varphi \left( - A_G + (J-I) \right)= N + \varphi(- A_G)$, proving $\eqref{eq:bound complement simpify}$. The last equality \eqref{eq:adjacency bound regular} follows from noting that if $G$ is regular, $\boldsymbol{1}$ will be an eigenvector of $A_G$, which entails that $\left. P_{\boldsymbol{1}^\perp} A_G P_{\boldsymbol{1}^\perp} \right|_{\boldsymbol{1}^\perp}$ will have the same eigenvalues as $A_G$, excluding the largest eigenvalue $\lambda_1$. 
\end{proof}

We now apply Lemma \ref{lemma:adjacency hamil cycle} to various graphs to see how it can be used in practice for proving non-Hamiltonicity or non-traceability.

\begin{exmp}[Bow Tie graph]\label{exmp:bowtie graph nonHam}
The Bow Tie graph is the $5$-vertex graph drawn in Figure \ref{fig:bowtie graph} below. For this graph, we get $\varphi (A_{\overline{G}}) = 0.658$, entailing by Lemma \ref{lemma:adjacency hamil cycle} that it is non-Hamiltonian, but which does not rule out it being traceable (as it indeed is).

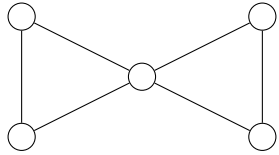
\begin{figure}[ht]
    \centering
    \begin{tikzpicture}[main/.style = {draw, circle}, scale=0.8] 
\node[main] (1) at (0,0) {}; 
\node[main] (2) at (-2,1) {};
\node[main] (3) at (-2,-1) {}; 
\node[main] (4) at (2,1) {};
\node[main] (5) at (2,-1) {};

\draw (1) -- (2);
\draw (1) -- (3);
\draw (1) -- (4);
\draw (1) -- (5);

\draw (3) -- (2);
\draw (4) -- (5);

    \end{tikzpicture}
    \caption{ \small{A drawing of the $5$-vertex \emph{Bow Tie graph}. It is not difficult to see that this graph indeed does not contain Hamiltonian cycles, but does however contain Hamiltonian paths.}}
    \label{fig:bowtie graph}
\end{figure}
\end{exmp}

\begin{exmp}[Complete bi-partite graphs]\label{exmp:biPartite graph nonHam or}

The complete bi-partite graph $G=K_{n,m}$ has $n+m$ vertices partitioned into two sets of size $n$ and $m$ respectively, and where two vertices are adjacent iff they belong to different sets. For this graph, we have $A_{\overline{G}} = \left( 
\begin{array}{c|c} 
  J_n-I_n & 0 \\ 
  \hline 
  0 & J_m-I_m 
\end{array} 
\right) $, where $I_l$ and $J_l$ respectively denote the $l \times l$ identity and all-ones matrix, and $0$ here denotes either the $n \times m$ or $m \times n$ zero-matrix. We further get
\begin{align*}
- P_{\boldsymbol{1}^\perp} A_{\overline{G}} P_{\boldsymbol{1}^\perp} = I_{n+m} - \frac{2}{n+m} 
\left( 
\begin{array}{c|c} 
  m J_n & 0 \\ 
  \hline 
  0 & n J_m 
\end{array} 
\right) + \frac{2nm-n-m}{(n+m)^2} J_{n+m} 
\end{align*}
Besides the trivial $0$-eigenvalue, the matrix $- P_{\boldsymbol{1}^\perp} A_{\overline{G}} P_{\boldsymbol{1}^\perp}$ above have eigenvalues: $1$ with multiplicity $n+m-2$ and $1-\frac{2mn}{m+n}$ with multiplicity $1$, which lets us calculate $\varphi (A_{\overline{G}}) $ as
\begin{align*}
\varphi (A_{\overline{G}}) = \frac{(n-m)^2}{n+m} + \left( 1 - \cos \left( \frac{ \pi (1-(-1)^{n+m})}{2(n+m)} \right) \right) \frac{2nm}{n+m}
\end{align*}
We thus see that for $G=K_{n,m}$ we get $\varphi (A_{\overline{G}}) > 0$ for all $n \neq m$, while $\varphi (A_{\overline{G}}) = 0$ if $n=m$, proving by Lemma \ref{lemma:adjacency hamil cycle} that $K_{n,m}$ is non-Hamiltonian for all $n \neq m$. Meanwhile, if we e.g. have $m < n - \sqrt{2n}$, we get $\varphi (A_{\overline{G}}) > 1$, proving that $K_{n,m}$ is non-traceable in this case (of course, it is also non-traceable if $m < n-1$). 
\end{exmp}

Example \ref{exmp:biPartite graph nonHam or} is particularly interesting because $K_{n,n}$ is a Cayley graph, which can e.g. be seen as $K_{n,n} = \Gamma ( \mathbb{Z}_{2n} , O )$ where $\mathbb{Z}_{2n}$ is the cyclic group of $2n$ elements, and $O$ is the generating set of odd integers in the group, i.e. $O = \{1 , 3 , ... , 2n-1\}$. The result above does not rule out $G=K_{n,n}$ being Hamiltonian, as we here have $\varphi (A_{\overline{G}}) = 0$ ($K_{n,n}$ is indeed Hamiltonian), but it does show that $G=K_{n,n}$ saturates the inequality necessary for Hamiltonicity $\varphi (A_{\overline{G}}) \leq 0$, i.e. if $\varphi (A_{\overline{G}})$ had been any larger, $G$ could not have been Hamiltonian, which constitutes an almost counterexample to the claim that all Cayley graphs are Hamiltonian, in the sense mentioned above.

\begin{exmp}[Union of cliques] \label{exmp:union of clique nonHam}
Consider the graph $G$ which is the disjoint union of two cliques of size $n$, i.e. $G=K_n \oplus K_n$. For this graph, we have $A_{\overline{G}} = \left( 
\begin{array}{c|c} 
  0 & J \\ 
  \hline 
  J & 0 
\end{array} 
\right) $, and by reusing the calculation from Example \ref{exmp:2cluster tsp}, we then get $\varphi (A_{\overline{G}}) = \left( 1 - \cos \left( \frac{\pi}{n} \right) \right) n$, which is greater than $0$ for all $n$, but only greater than $1$ for $n \leq 4$. By Lemma \ref{lemma:adjacency hamil cycle}, this proves that $K_n \oplus K_n$ is non-Hamiltonian for any $n$ and non-traceable for $n \leq 4$ (of course it is also non-traceable for all $n$).
\end{exmp}

\subsection{Necessary distance eigenvalue condition for Hamiltonicity} \label{subsec:distance eig necessary condition}

The following lemma provides a necessary condition on eigenvalues obtained from the distance matrix of a graph for it to be Hamiltonian via the function $\varphi$ defined in Theorem \ref{thm:main thm}.

\begin{lemma} \label{lemma:hamilton distanceM}
For any simple graph $G$, if $G$ is Hamiltonian, we must have $\varphi \left( D_G \right) \leq N$. Note further that if $G$ is transmission-regular, $\varphi \left( D_G \right)$ simplifies to
\begin{align} \label{eq:transmission regular simple}
\varphi \left( D_G \right) = - \sum_{j=1}^{N-1} c_j \kappa_j
\end{align}
where $\kappa_j$ is the $j$'th smallest eigenvalue of $D_G$, and $c_j$ is the $j$'th smallest number from the set $\left\{ \left( 1 - \cos \left( \frac{2 \pi k}{N} \right) \right) \right\}_{k=1}^{N-1}$ (with multiplicities). 
\end{lemma}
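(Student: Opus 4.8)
\textbf{Proof proposal for Lemma \ref{lemma:hamilton distanceM}.}

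The plan is to mirror the structure of the proof of Lemma \ref{lemma:adjacency hamil cycle}, replacing the adjacency matrix of the complement by the graph distance matrix $D_G$ itself. First I would consider the TSP whose distance matrix is exactly $D_G$. If $G$ is Hamiltonian, it contains a Hamiltonian cycle $v_1 \to v_2 \to \cdots \to v_N \to v_1$; along this cycle each consecutive pair of vertices is adjacent in $G$, so $\left( D_G \right)_{v_i, v_{i+1}} = 1$ for each $i$ (indices mod $N$). This particular roundtrip therefore has TSP length exactly $N$, which gives $L_{min}(D_G) \leq N$. Invoking Theorem \ref{thm:main thm}, $\varphi(D_G) \leq L_{min}(D_G) \leq N$, which is the claimed necessary condition.

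For the simplification \eqref{eq:transmission regular simple} in the transmission-regular case, I would argue exactly as in the last paragraph of the proof of Lemma \ref{lemma:adjacency hamil cycle}: transmission-regularity means $\boldsymbol{1}$ is an eigenvector of $D_G$, hence $D_G$ and $P_{\boldsymbol{1}^\perp}$ commute, so the restriction $\left. P_{\boldsymbol{1}^\perp} D_G P_{\boldsymbol{1}^\perp} \right|_{\boldsymbol{1}^\perp}$ has the same spectrum as $D_G$ with the top (Perron) eigenvalue removed. Consequently the eigenvalues $\mu_j$ of $\left. - P_{\boldsymbol{1}^\perp} D_G P_{\boldsymbol{1}^\perp} \right|_{\boldsymbol{1}^\perp}$, ordered decreasingly, are $\mu_j = -\kappa_j$ where $\kappa_1 \leq \cdots \leq \kappa_{N-1}$ are the $N-1$ smallest eigenvalues of $D_G$ (the largest one being the removed Perron eigenvalue, and negating reverses the order so that the largest $\mu$ corresponds to the smallest $\kappa$). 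Plugging $\mu_j = -\kappa_j$ into $\varphi(D_G) = \sum_{j=1}^{N-1} c_j \mu_j$ from \eqref{eq:main thm line} immediately yields $\varphi(D_G) = -\sum_{j=1}^{N-1} c_j \kappa_j$, which is \eqref{eq:transmission regular simple}.

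There is essentially no hard step here — both parts are direct corollaries of results already established. The only point requiring a little care is the bookkeeping of the eigenvalue ordering in the transmission-regular simplification: one must check that after deleting the Perron eigenvalue of $D_G$ and negating, the decreasing order of the $\mu_j$ corresponds precisely to the increasing order of the remaining $\kappa_j$, so that the coefficients $c_j$ (indexed by increasing order) pair up correctly. A secondary subtlety worth a sentence is justifying that the shortest roundtrip cannot do better than $N$ is not needed — we only need the upper bound $L_{min}(D_G) \leq N$ witnessed by the Hamiltonian cycle, not equality — so the argument is robust even when shorter roundtrips happen not to exist in the metric sense (they cannot, since all off-diagonal entries of $D_G$ are at least $1$, but we do not need this).
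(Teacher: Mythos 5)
Your proposal is correct and follows essentially the same route as the paper's own proof: the Hamiltonian cycle witnesses a roundtrip of length $N$ in the TSP with distance matrix $D_G$, so Theorem \ref{thm:main thm} gives $\varphi(D_G) \leq L_{min}(D_G) \leq N$, and the transmission-regular simplification comes from $\boldsymbol{1}$ being an eigenvector of $D_G$ so that restricting to $\boldsymbol{1}^\perp$ removes the (Perron, hence largest) eigenvalue and negation reverses the ordering. Your two points of extra care — that only the upper bound $L_{min}(D_G)\leq N$ is needed, and that the deleted eigenvalue is indeed the largest — are both valid and slightly tighten the paper's phrasing without changing the argument.
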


\begin{proof}
Consider a TSP with distance matrix $D_G$. If $G$ was Hamiltonian, this would correspond to a optimal TSP roundtrip of total length $N$, since the distance between any successive adjacent vertices in the Hamiltonian cycle is $1$. Thus, if $G$ is Hamiltonian, we get by Theorem \ref{thm:main thm} that $L_{min} (D) = N \geq \varphi \left( D_G \right)$. If $G$ is transmission regular, $\boldsymbol{1}$ is an eigenvector of $D_G$, and $ \left. - P_{\boldsymbol{1}^\perp} D_G P_{\boldsymbol{1}^\perp} \right|_{\boldsymbol{1}^\perp}$ has the same spectrum as $- D_G$ with (minus) the largest eigenvalue of $D_G$ removed, proving \eqref{eq:transmission regular simple}. 
\end{proof}

Similar to the previous section, we now look at examples of Lemma \ref{lemma:hamilton distanceM} being applied to some graphs. 

\begin{exmp}[Path graphs] \label{exmp:path graph nonHam}
Consider the $n$-vertex \emph{path graph} $G=P_n$, whose vertices can be labeled by $i \in \{ 0, 1, ... , n-1\}$, and for which vertices $i$ and $j$ are adjacent if and only if $|i-j|=1$. The distance matrix $D_G$ of $G$ equals the TSP distance matrix defined in Example \ref{exmp: line TSP euclidean}, where we also saw that $\varphi(D_G) > n$ for $n=10, \ 20, \ 30$, proving that $P_n$ is non-Hamiltonian, at least for $n$ equaling the given values (although it seems likely that we have $\varphi(D_G) > n$ for all $n$).
\end{exmp}

In the following example, we provide a list of non-trivial Cayley graphs for which the bound from Lemma \ref{lemma:hamilton distanceM} is always saturated, i.e. $\varphi(D_G) = N$. Thus, like the complete bi-partite graphs $K_{n,n}$ mentioned in Example \ref{exmp:biPartite graph nonHam or}, the following Cayley graphs are \emph{almost} counterexamples to the version of the Lov{\'a}sz conjecture mentioned in the introduction. But whereas all complete bi-partite graphs $K_{n,n}$ are known to be Hamiltonian, it is not known whether all the following Cayley graphs are Hamiltonian. Hamiltonicity has e.g. only been proven when $m$ is even in the following \cite{alspach2010hamilton}.

\begin{exmp}[Absolute order Cayley graph of Dihedral group] \label{expm:dihedral cayley distance graph}
Consider the absolute order Cayley graph $G=\Gamma (I_2(m), T)$ for the dihedral group $I_2 (m)$, where $T$ is the set of reflections (see \cite{RENTELN2011738}). This is a graph of order $N=2m$, and the spectrum of its distance matrix $D_G$ has been computed \cite{RENTELN2011738} to be
\begin{align*}
\sigma \left( D_G \right) = \left\{ (3m-2)^{(1)} , (m-2)^{(1)} , \left( -2 \right)^{(2m-2)} \right\}
\end{align*}
Since all Cayley graphs are transmission regular, using the remark in Lemma \ref{lemma:hamilton distanceM}, we get $\varphi \left( D_G \right)= (2m-2)(2)+2(-m+2)=2m = N$, thus saturating the bound from Lemma \ref{lemma:hamilton distanceM}.
\end{exmp}

\section{Conclusion and outlook}

In this paper, we derived an eigenvalue lower bound on the general TSP solution (Theorem \ref{prop:general asymmetrical bound}), which has a particularly simple form $\varphi (D)$ for symmetric TSPs (Theorem \ref{thm:main thm}). $\varphi(D)$ is notably a simple linear combination of eigenvalues of the matrix $- P_{\boldsymbol{1}^\perp} D P_{\boldsymbol{1}^\perp}$, $D$ being the TSP distance matrix, which is in its own right of importance in Euclidean geometry. This new bound can, among other things, function as a crude easy-to calculate TSP heuristic, and it possesses several nice properties lending justification to this claim as explored in section \ref{sec:main result}. The bound is also tight and performs better than trivial TSP bounds for several TSP families as seen in Section \ref{sec:calculating bound for examples}. A notable application within pure mathematics follows from applying the bound to the adjacency or distance matrices for graphs, as explored in Section \ref{sec:hamiltonian path prob}, where it entailed necessary eigenvalue conditions the graph to be Hamiltonian or traceable. These eigenvalue conditions are relatively easy to calculate, especially for Cayley graphs. Searching for violations of these eigenvalue conditions could e.g. offer a possible strategy for finding counterexamples to the version of the Lovasz conjecture stating that all (un-directed) Cayley graphs are Hamiltonian. 
\\
\\
Multiple \emph{open questions} remains at this point. One immediately desires to know exactly how good the new bound is at estimating the TSP solution -- either in general or in various restricted settings. This thought can be formalized into more precisely stated open questions, as mentioned below. 

In sections \ref{sec:main result} and \ref{sec:calculating bound for examples}, it was discussed how the new bound seemed to perform better for TSPs that satisfied $- P_{\boldsymbol{1}^\perp} D P_{\boldsymbol{1}^\perp} \geq 0$, which by Proposition \ref{prop:positive of euclidean} includes all Euclidean TSPs. However, in example \ref{exmp:2cluster tsp}, we saw an example of a Euclidean TSP for which the ratio between the eigenvalue bound $\varphi(D)$ and the TSP solution was $O(1/N)$, $N$ being the number of cities. Proposition \ref{prop:lower bound} tells us that the ratio is always greater than $\left( 1 - \cos \left( \frac{2 \pi}{N} \right) \right) \frac{N-1}{N} = O(1 / N^2)$, but it could seem reasonable to conjecture that this ratio is in the worst case never less than some $O(1/N)$ function in the Euclidean case. Another question concerns the nature of TSPs for which the new bound is tight, i.e. $\varphi(D) = L_{min} (D)$, or close to being tight? In examples \ref{exmp: TSP all distances equal} and \ref{exmp: TSP circular polygon} we encountered both planer and non-planar TSPs (with arbitrary number of cities) for which the bound was tight, and one can e.g. show that a necessary condition for the bound being tight is that $ \left. - P_{\boldsymbol{1}^\perp} D P_{\boldsymbol{1}^\perp} \right|_{\boldsymbol{1}^\perp}$ must commute with $\Pi_{\sigma} C_N \Pi_{\sigma}^T$ for some permutation $\sigma$. A final question one could ask in this setting is: what is the ratio between the new bound to the TSP solution for certain classes of random TSPs, Euclidean or otherwise?

Multiple open questions also remain regarding the application of the new bound to the Hamiltonian cycle and paths problems, as explored in Section \ref{sec:hamiltonian path prob}. We can ask basically the same questions posed above for the results in lemmas \ref{lemma:adjacency hamil cycle} and \ref{lemma:hamilton distanceM} -- that is, for which graphs, or types of graphs can these eigenvalue conditions be used to detect the non-Hamiltonicity or non-traceability in graphs and how good are they in general at this task? Of special interest here is the question of whether one can use Lemma \ref{lemma:adjacency hamil cycle} or \ref{lemma:hamilton distanceM} to prove non-Hamiltonicity for graphs whose Hamiltonian status is currently unknown. In particular, can one find Cayley graphs that can be proven non-Hamiltonian in this way, and thus offer counterexamples to the variant of the Lovasz conjecture, stating that all Cayley graphs are Hamiltonian? Checking these conditions for a large number of Cayley graphs should not be too difficult, as discussed earlier, since this just involves computing adjacency or distance eigenvalues of Cayley graphs.

\section*{Acknowledgments}

The author thanks Aida Abiad, Andreas Bj{\"o}rklund and Thore Husfeldt for helpful discussions. This work was supported by VILLUM FONDEN via the QMATH Centre of Excellence (Grant No.10059).

\bibliographystyle{acm}
\bibliography{TSPrefs}

\newpage

\appendices

\section{Von Neumann's trace inequality} \label{app:doubly stochastoc result}

We here prove \emph{von Neumann's trace inequality} for finite normal matrices (or a trivial generalization thereof, depending on how it is stated) \cite{Mirsky1975-xo, CARLSSON2021149} (cf. \cite[ch. 27]{Dym2023}), which is crucial to the proofs of theorems \ref{thm:main thm} and \ref{prop:general asymmetrical bound} from Section \ref{sec:main result}. The inequality essentially follows from the \emph{Birkhoff-von Neumann theorem}. 

\begin{lemma}[Von Neumann's trace inequality] \label{lemma:doubly stochastic}
Let $A$ and $B$ be two normal (hence diagonalizable) $n \times n$ matrices, whose (complex) eigenvalues, listed with multiplicities, are given by $\{ \lambda_j \}_{j=1}^n$ and $\{ \mu_j \}_{j=1}^n$ respectively. Then, the number $\trace \left( A B \right)$ must lie in the convex hull of the $n !$ points $\sum_{j=1}^n \lambda_j \mu_{\sigma (j)}$, where $\sigma$ ranges over all permutations $S_n$ of the $n$ numbers $\{1 , 2 , ... , n\}$, i.e.
\begin{align} \label{eq:stoch lemma convx hull}
\trace \left( A B \right) \in \conv \left( \left\{ \sum_{j=1}^n \lambda_j \mu_{\sigma (j)} \ \middle| \ \sigma \in S_n \right\} \right)
\end{align}
Further, if $A$ and $B$ are Hermitian matrices, then $\trace (A B)$ is a real number, and \eqref{eq:stoch lemma convx hull} then entails
\begin{align} \label{eq:app:doubly stochastic result}
\sum_{j=1}^n \lambda_j \mu_{n+1-j} \leq  \trace \left( A B \right) \leq \sum_{j=1}^n \lambda_j \mu_j
\end{align}
where the real eigenvalues of $A$ and $B$ have been listed in decreasing order (with multiplicities) as $\lambda_1 \geq \lambda_2 \geq ... \geq \lambda_n$ and $\mu_1 \geq \mu_2 \geq ... \geq \mu_n$ respectively.
\end{lemma}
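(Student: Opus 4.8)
The plan is to prove Lemma \ref{lemma:doubly stochastic} by diagonalizing both matrices and reducing the trace to a bilinear form in the eigenvalues whose coefficient matrix is doubly stochastic, then invoke the Birkhoff--von Neumann theorem. Concretely, since $A$ and $B$ are normal, write $A = U \Lambda U^*$ and $B = V \mathrm{M} V^*$ with $U, V$ unitary and $\Lambda = \mathrm{diag}(\lambda_1, \dots, \lambda_n)$, $\mathrm{M} = \mathrm{diag}(\mu_1, \dots, \mu_n)$. Then $\trace(AB) = \trace(U \Lambda U^* V \mathrm{M} V^*) = \trace(\Lambda W \mathrm{M} W^*)$ where $W := U^* V$ is unitary. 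Expanding the trace in coordinates gives $\trace(AB) = \sum_{j,k} \lambda_j \mu_k |W_{jk}|^2$. The matrix $Q$ with entries $Q_{jk} := |W_{jk}|^2$ is doubly stochastic, because $W$ is unitary: each row and column of $W$ has unit Euclidean norm, so $\sum_k |W_{jk}|^2 = 1$ and $\sum_j |W_{jk}|^2 = 1$.

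Next I would apply the Birkhoff--von Neumann theorem, which states that the set of $n \times n$ doubly stochastic matrices is the convex hull of the $n!$ permutation matrices. Hence $Q = \sum_{\sigma \in S_n} t_\sigma P_\sigma$ for some weights $t_\sigma \geq 0$ with $\sum_\sigma t_\sigma = 1$, where $P_\sigma$ is the permutation matrix of $\sigma$. Substituting this convex decomposition back, $\trace(AB) = \sum_{j,k} \lambda_j \mu_k Q_{jk} = \sum_\sigma t_\sigma \sum_{j,k} \lambda_j \mu_k (P_\sigma)_{jk} = \sum_\sigma t_\sigma \sum_{j=1}^n \lambda_j \mu_{\sigma(j)}$, which exhibits $\trace(AB)$ as an explicit convex combination of the $n!$ numbers $\sum_{j=1}^n \lambda_j \mu_{\sigma(j)}$. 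This establishes \eqref{eq:stoch lemma convx hull}.

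For the Hermitian case, the eigenvalues $\lambda_j$ and $\mu_j$ are real, so every point $\sum_{j=1}^n \lambda_j \mu_{\sigma(j)}$ is real and thus $\trace(AB)$, lying in their convex hull, is real. The bounds in \eqref{eq:app:doubly stochastic result} then follow from the elementary rearrangement inequality: among all permutations $\sigma$, the sum $\sum_j \lambda_j \mu_{\sigma(j)}$ is maximized when both sequences are sorted in the same (decreasing) order, giving $\sum_j \lambda_j \mu_j$, and minimized when sorted in opposite orders, giving $\sum_j \lambda_j \mu_{n+1-j}$. Since $\trace(AB)$ is a convex combination of such sums, it lies between the minimum and the maximum, which is exactly \eqref{eq:app:doubly stochastic result}.

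I do not anticipate a serious obstacle here; the argument is standard. The one point requiring a small amount of care is the verification that $Q_{jk} = |W_{jk}|^2$ is genuinely doubly stochastic --- this uses unitarity of $W$ in the form $\sum_k W_{jk}\overline{W_{jk}} = (WW^*)_{jj} = 1$ and the analogous column identity from $W^*W = I$ --- and, in the Hermitian sub-case, correctly invoking the rearrangement inequality for the extremal permutations. Everything else is bookkeeping with traces of products of diagonal and unitary matrices.
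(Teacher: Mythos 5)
Your proposal is correct and follows essentially the same route as the paper's own proof: diagonalize both normal matrices, observe that $\trace(AB)=\sum_{j,k}\lambda_j\mu_k|(U^*V)_{jk}|^2$ with $|(U^*V)_{jk}|^2$ doubly stochastic, invoke Birkhoff--von Neumann, and finish the Hermitian case with the rearrangement inequality. The only cosmetic difference is that the paper proves the rearrangement inequality from scratch via adjacent-swap arguments, whereas you cite it as standard.
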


\begin{proof}
We shall start by proving the result \eqref{eq:stoch lemma convx hull} before proving \eqref{eq:app:doubly stochastic result} as a corollary. 

The result \eqref{eq:stoch lemma convx hull} can be proven using the Birkhoff-von Neumann theorem for doubly stochastic matrices \cite{JURKAT1967}\cite[Ch. 27]{Dym2023}, which states that any \emph{doubly stochastic} matrix $X$ can be written as a convex combination of permutation matrices. A \emph{doubly stochastic} matrix $X$ is a matrix of real non-negative components $X_{jk}$, such that all columns and rows sum to $1$, i.e. $\sum_{j=1}^n X_{jk} = 1$ for all $k$ and $\sum_{k=1}^n X_{jk}= 1$ for all $j$. Thus, for any doubly stochastic matrix $X$, there exists non-negative numbers $\{ s_j \}_{j=1}^M$, $s_j \geq 0$, $\sum_{j=1}^M s_j = 1$ summing to $1$, and permutation matrices $\{ \Pi_{j} \}_{j=1}^M$ such that 
\begin{align} \label{eq:birkhoff v Neu}
X = \sum_{j=1}^M s_j \Pi_{j}
\end{align}
We can without loss of generality set $M=n !$ here. 

Now, since $A$ and $B$ by assumption are normal, they are also by the spectral theorem orthogonally diagonalizable, which means that there exists unitary matrices $U$ and $V$ such that $A = U D_\lambda U^\dagger$ and $B = V D_\mu V^\dagger$, where the diagonal matrices $D_\lambda$ and $D_\mu$ can without loss of generality be chosen to have components $\left( D_\lambda \right)_{jk} = \lambda_j \delta_{jk}$ and $\left( D_\mu \right)_{jk} = \mu_j \delta_{jk}$, due to how we denoted the eigenvalues of $A$ and $B$. Note now that
\begin{align}
\trace \left( A B \right) = \sum_{j,k=1}^n A_{jk} B_{kj} = \sum_{j,k=1}^n \left( U D_\lambda U^\dagger \right)_{jk} \left( V D_\mu V^\dagger \right)_{kj} = \sum_{j,k,l,m=1}^n U_{j l} \lambda_l U_{lk}^\dagger V_{k m} \mu_m V_{mj}^\dagger \notag \\
= \sum_{l,m=1}^n \lambda_l \mu_m \left( \sum_{k=1}^n U_{lk}^\dagger V_{km} \right) \left( \sum_{j=1}^n \left( U_{lj}^\dagger \right)^{*} \left( V_{jm} \right)^{*} \right) = \sum_{l,m=1}^n \lambda_l \mu_m \left| \sum_{k=1}^n U_{lk}^\dagger V_{km} \right|^2 \notag \\ \label{eq:proof convex hull st 1}
=  \sum_{l,m=1}^n \lambda_l \mu_m \left| \left( U^\dagger V \right)_{lm} \right|^2 = \sum_{l,m=1}^n \lambda_l \mu_m X_{lm} 
\end{align}
where we have defined the matrix $X_{lm} = \left| \left( U^\dagger V \right)_{lm} \right|^2$, which we shall now show to be doubly stochastic. The components $X_{lm} = \left| \left( U^\dagger V \right)_{lm} \right|^2$ of $X$ are clearly real and non-negative for all $l$ and $m$. Note further that since $U$ and $V$ are unitary, $U^\dagger V$ must also be unitary. This means that, in any basis, the rows and columns of $U^\dagger V$ correspond to orthonormal vectors. Hence, both $\sum_{l=1}^n  X_{lm} = \sum_{l=1}^n \left| \left( U^\dagger V \right)_{lm} \right|^2$ and $\sum_{m=1}^n X_{lm} = \sum_{m=1}^n \left| \left( U^\dagger V \right)_{lm} \right|^2$ equal the norm of some unit-vector, i.e. equals $1$, giving us $\sum_{l=1}^n X_{lm} = \sum_{m=1}^n X_{lm} = 1 $. This proves that $X$ is doubly stochastic. 

We now use the Birkhoff-von Neumann theorem mentioned above, which entails that since $X$ is doubly stochastic, it can be written as $X = \sum_{r=1}^{n!} s_r \Pi_{r}$, where $\{ s_r \}_{r=1}^{n !}$ are non-negative real numbers summing to $1$, and $\{ \Pi_{r} \}_{r=1}^{n !}$ is the set of $n$-dimensional permutation matrices, see \eqref{eq:birkhoff v Neu}. Let us say that $\Pi_{r}$ is associated with the permutation $\sigma_r$, i.e. $\left( \Pi_{r} \right)_{lm} = \delta_{l \sigma_r^{-1}(m)}$. Plugging the formula $X = \sum_{r=1}^{n!} s_r \Pi_{r}$ for $X$ into \eqref{eq:proof convex hull st 1} gives us
\begin{align*}
\trace \left( A B \right) = \sum_{l,m = 1}^n \lambda_l \mu_m X_{lm} = \sum_{l,m = 1}^n \lambda_l \mu_m \sum_{r=1}^{n!} s_r \delta_{l \sigma_r^{-1}(m)} = \sum_{r=1}^{n!} s_r \left( \sum_{l=1}^n \lambda_l \mu_{\sigma_r (l)} \right)
\end{align*}
which shows that that $\trace ( A B )$ is a convex combination, with convex coefficients $s_r$, of the numbers $ \sum_{l=1}^n \lambda_l \mu_{\sigma_r (l)}$ where $\sigma_r$ ranges over all permutations of the $n$ numbers $ \{ 1,2, ... ,n  \}$, hence proving \eqref{eq:stoch lemma convx hull}. 
\\
\\
We now prove \eqref{eq:app:doubly stochastic result}. By assumption, $A$ and $B$ are now Hermitian, which means that they have real eigenvalues, which can be ordered with multiplicities as $\lambda_1 \geq ... \geq \lambda_n$ and $\mu_1 \geq ... \geq \mu_n$ respectively. The result \eqref{eq:stoch lemma convx hull} just proven tells us that $\trace (A B)$ lies in the convex hull of the real numbers $\sum_{j=1}^n \lambda_j \mu_{\sigma (j)}$, as $\sigma$ ranges over all permutations, which is just a segment of the real line. This proves that $\trace (A B)$ itself is a real number and further that
\begin{align} \label{eq:proof stoch st 2}
\min_{\sigma \in S_n} \left( \sum_{j=1}^n \lambda_j \mu_{\sigma (j)} \right) \leq \trace \left( A B \right) \leq \max_{\sigma \in S_n} \left( \sum_{j=1}^n \lambda_j \mu_{\sigma (j)} \right)
\end{align}
In order to prove \eqref{eq:app:doubly stochastic result} we must now show that $\sum_{j=1}^n \lambda_j \mu_{\sigma (j)}$ in equation \eqref{eq:proof stoch st 2} above is minimized when $\sigma$ is the permutation $\sigma(j) = n+1-j$ and is conversely maximized when $\sigma$ is the identity permutation $\sigma (j) = j$, i.e. $\min_{\sigma \in S_n} \left( \sum_{j=1}^n \lambda_j \mu_{\sigma (j)} \right) = \sum_{j=1}^n \lambda_j \mu_{n+1-j} $ and $\max_{\sigma \in S_n} \left( \sum_{j=1}^n \lambda_j \mu_{\sigma (j)} \right) = \sum_{j=1}^n \lambda_j \mu_j $.

We shall start by showing $\max_{\sigma \in S_n} \left( \sum_{j=1}^n \lambda_j \mu_{\sigma (j)} \right) = \sum_{j=1}^n \lambda_j \mu_j $. If $\sigma$ is any permutation that does not act like the identity, i.e. where we do not have $\sigma(i)=i$ for all $i$, then there will exists distinct numbers $i < j$ such that $\sigma(i) > \sigma(j)$. If this is the case, we can compose $\sigma$ with the swap permutation $S(i,j)$ that swaps $i$ and $j$, applied before $\sigma$, with the result of adding the term $\lambda_i \left( \mu_{\sigma(j)} - \mu_{\sigma(i)} \right)  + \lambda_j \left( \mu_{\sigma(i)} - \mu_{\sigma(j)} \right) = \left( \lambda_i - \lambda_j \right) \left( \mu_{\sigma(j)} - \mu_{\sigma(i)} \right) \geq 0 $ to the sum from \eqref{eq:proof stoch st 2}, i.e. 
\begin{align*}
\sum_{k=1}^n \lambda_k \mu_{ \sigma \circ S(i,j) (k)} = \sum_{k=1}^n \lambda_k \mu_{ \sigma (k)} + \left( \lambda_i - \lambda_j \right) \left( \mu_{\sigma(j)} - \mu_{\sigma(i)} \right) \geq \sum_{k=1}^n \lambda_k \mu_{ \sigma (k)}
\end{align*}
which does not decrease the sum from \eqref{eq:proof stoch st 2}, showing that $\sigma$ could not have been the maximizing permutation, or at least that we could have applied $S(i,j)$ before $\sigma$ giving a permutation with a fewer number of pairs $i < j$ for which $\sigma(i) > \sigma(j)$ without decreasing the sum from \eqref{eq:proof stoch st 2}. By iterating this argument if necessary, we conclude that the identity permutation is indeed a maximizing permutation, i.e. $\max_{\sigma \in S_n} \left( \sum_{j=1}^n \lambda_j \mu_{\sigma (j)} \right) = \sum_{j=1}^n \lambda_j \mu_j $. The inequality $\left( \lambda_i - \lambda_j \right) \left( \mu_{\sigma(j)} - \mu_{\sigma(i)} \right) \geq 0 $ used above follows from the assumptions $i < j$ and $\sigma(i) > \sigma(j)$, and the fact that we have ordered the eigenvalues in decreasing order, i.e. $k < l$ entails $\lambda_k \geq \lambda_l$ and $\mu_k \geq \mu_l$. Inserting $\max_{\sigma \in S_n} \left( \sum_{j=1}^n \lambda_j \mu_{\sigma (j)} \right) = \sum_{j=1}^n \lambda_j \mu_j $ into \eqref{eq:proof stoch st 2} proves the rightmost inequality in \eqref{eq:app:doubly stochastic result}, namely $\trace ( AB ) \leq \sum_{j=1}^n \lambda_j \mu_j$.

We shall now prove $\min_{\sigma \in S_n} \left( \sum_{j=1}^n \lambda_j \mu_{\sigma (j)} \right) = \sum_{j=1}^n \lambda_j \mu_{n+1-j} $ in a very similar way. If $\sigma$ is not the permutation given by $\sigma(i)=n+1-i$ for all $i$ -- i.e. the permutation that completely swaps the order between all elements, then there must exist some $i < j$ such that $\sigma(i) < \sigma(j)$. If this is the case, we can again compose $\sigma$ with the swap permutation $S(i,j)$ that swaps $i$ and $j$, applied before $\sigma$, with the result of adding the term $ \lambda_i \left( \mu_{\sigma(j)} - \mu_{\sigma(i)} \right) + \lambda_j \left( \mu_{\sigma(i)} - \mu_{\sigma(j)} \right) = - \left( \lambda_i - \lambda_j \right) \left( \mu_{\sigma(i)} - \mu_{\sigma(j)} \right) \leq 0 $ to the sum from \eqref{eq:proof stoch st 2}, i.e. 
\begin{align*}
\sum_{k=1}^n \lambda_k \mu_{ \sigma \circ S(i,j) (k)} = \sum_{k=1}^n \lambda_k \mu_{ \sigma (k)} - \left( \lambda_i - \lambda_j \right) \left( \mu_{\sigma(i)} - \mu_{\sigma(j)} \right) \leq \sum_{k=1}^n \lambda_k \mu_{ \sigma (k)}
\end{align*}
which does not increase the sum from \eqref{eq:proof stoch st 2}, showing that $\sigma$ could not have been the minimizing permutation, or at least that we could have applied $S(i,j)$ before $\sigma$ giving a permutation with a larger number of pairs $i < j$ for which $\sigma(i) > \sigma(j)$ without increasing the sum from \eqref{eq:proof stoch st 2}. By iterating this argument if necessary, we conclude that the permutation $\sigma(i) = n+1-i$ that totally swaps the order of the elements is a minimizing permutation i.e. $\min_{\sigma \in S_n} \left( \sum_{j=1}^n \lambda_j \mu_{\sigma (j)} \right) = \sum_{j=1}^n \lambda_j \mu_{n+1-j}$. The inequality $- \left( \lambda_i - \lambda_j \right) \left( \mu_{\sigma(i)} - \mu_{\sigma(j)} \right) \leq 0 $ used above follows from the assumptions $i < j$ and $\sigma(i) < \sigma(j)$ and the fact that we have ordered the eigenvalues in decreasing order, i.e. $k < l$ entails $\lambda_k \geq \lambda_l$ and $\mu_k \geq \mu_l$. Inserting $\min_{\sigma \in S_n} \left( \sum_{j=1}^n \lambda_j \mu_{\sigma (j)} \right) = \sum_{j=1}^n \lambda_j \mu_{n+1-j}$ into \eqref{eq:proof stoch st 2} proves the leftmost inequality in \eqref{eq:app:doubly stochastic result}, namely $\trace ( A B ) \geq \sum_{j=1}^n \lambda_j \mu_{n+1-j} $, thus completing the proof of the lemma.
\end{proof}

\section{Computing the new bound for the uniform circle TSP} \label{app:computing for circular TSP}

We here show that for the $N$-city uniform circle TSP given in Example \ref{exmp: TSP circular polygon} with the distance matrix $ \left( D_{jk} \right)_{0 \leq j,k \leq N-1} = \frac{\sin \left( \frac{\pi}{N} |j-k| \right)}{\sin \left( \frac{\pi}{N} \right)}$, we have $\varphi (D) = N$, i.e. the new bound is tight:  
\\
\\
Note first that $D$ is cyclic, meaning that $ \boldsymbol{\phi_k} := \left( 1 , \omega^k , \omega^{2k} , ... , \omega^{(N-1) k} \right)^T$, with $\omega := e^{ \frac{ 2 \pi i}{N}}$, is an eigenvector for all $k \in \{ 0 , 1 , ... , N-1 \}$. The corresponding eigenvalues are then given by
\begin{align*}
\lambda_k = \sum_{j=0}^{N-1} \frac{\sin \left( \frac{\pi}{N} j \right)}{\sin \left( \frac{\pi}{N} \right)} \omega^{kj} = \frac{1}{\sin \left( \frac{\pi}{N} \right)} \frac{1}{2 i}\left( \frac{\omega^{\left( k + \frac{1}{2} \right) N}-1}{\omega^{k + \frac{1}{2}} - 1}  - \frac{\omega^{\left( k - \frac{1}{2} \right) N}-1}{\omega^{k - \frac{1}{2}}-1} \right) \notag  \\
= \frac{1}{\sin \left( \frac{\pi}{N} \right)} \frac{1}{ i}\left( \frac{1}{\omega^{k - \frac{1}{2}} - 1}  - \frac{1}{\omega^{k + \frac{1}{2}}-1} \right) = \frac{1}{\sin \left( \frac{\pi}{N} \right)} \frac{\sin \left( \frac{\pi}{N} \right)}{\cos \left( \frac{ 2 \pi}{N} k \right) - \cos \left( \frac{\pi}{N} \right)} = \frac{-1}{\cos \left( \frac{\pi}{N} \right) - \cos \left( \frac{ 2 \pi}{N} k \right)}
\end{align*}
The eigenvalue $\lambda_0 = \frac{1}{1-\cos \left( \frac{\pi}{N} \right)}$ has eigenvector $\boldsymbol{1}$, which is annihilated by $P_{\boldsymbol{1}^\perp}$, which means that the eigenvalues of $ \left. - P_{\boldsymbol{1}^\perp } D P_{\boldsymbol{1}^\perp } \right|_{\boldsymbol{1}^\perp } $ are given by $\mu_k = \frac{1}{\cos \left( \frac{\pi}{N} \right) - \cos \left( \frac{ 2 \pi}{N} k \right)}$ for $1 \leq k \leq N-1$. We see that all these eigenvalues are positive (as expected from Proposition \ref{prop:positive of euclidean}), and they are ordered like $\mu_k < \mu_j$ whenever $\left| \frac{N}{2} - k \right| < \left| \frac{N}{2} - j \right|$. With this expression for the $\mu_j$'s, we can calculate $\varphi (D)$ by using either \eqref{eq:theta odd N} or \eqref{eq:theta even N} from Theorem \ref{thm:main thm}, depending on whether $N$ is even or odd, and get
\begin{align*}
\varphi (D) = \sum_{l=1}^{\left\lfloor \frac{N-1}{2} \right\rfloor} \left( 1 - \cos \left( \frac{2 \pi l}{N} \right) \right) \frac{2}{\cos \left( \frac{\pi}{N} \right) - \cos \left( \frac{ 2 \pi}{N} l \right)} + \left( N-1 - 2 \left\lfloor \frac{N-1}{2} \right\rfloor \right) 2 \frac{1}{\cos \left( \frac{\pi}{N} \right) - \cos \left( \frac{ 2 \pi}{N} \frac{N}{2} \right)}  \\
 = \sum_{l=1}^{N-1} \left( 1 - \cos \left( \frac{2 \pi l}{N} \right) \right) \frac{1}{\cos \left( \frac{\pi}{N} \right) - \cos \left( \frac{ 2 \pi}{N} l \right)} = \sum_{l=1}^{N-1} \left( 1 +  \frac{1-\cos \left( \frac{\pi}{N} \right)}{\cos \left( \frac{\pi}{N} \right) - \cos \left( \frac{ 2 \pi}{N} l \right)} \right)  \\
 = N - 1 + \left( 1-\cos \left( \frac{\pi}{N} \right) \right) \sum_{l=1}^{N-1} \mu_l
\end{align*}
We thus have $\varphi (D) = N - 1 + \left( 1-\cos \left( \frac{\pi}{N} \right) \right) \sum_{l=1}^{N-1} \mu_l$. Since $D$ is traceless, and since the spectrum of $D$ equals the spectrum of $ \left.  P_{\boldsymbol{1}^\perp } D P_{\boldsymbol{1}^\perp } \right|_{\boldsymbol{1}^\perp } $, except with the eigenvalue $\lambda_0 = \frac{1}{1-\cos \left( \frac{\pi}{N} \right)}$ added, we must have
\begin{align*}
0 = \trace (D) = \sum_{k=0}^{N-1} \lambda_k = \lambda_0 + \sum_{l=1}^{N-1} \lambda_l = \lambda_0 + \sum_{l=1}^{N-1} -\mu_l \Rightarrow \sum_{l=1}^{N-1} \mu_l = \lambda_0 = \frac{1}{1-\cos \left( \frac{\pi}{N} \right)}
\end{align*}
Using this in the previous expression for $\varphi(D)$ finally proves $\varphi (D) = N - 1 + \left( 1-\cos \left( \frac{\pi}{N} \right) \right) \sum_{l=1}^{N-1} \mu_l = N $.

\end{document}